\documentclass[10pt]{article}

\usepackage{amsfonts}
\usepackage{amsmath}
\usepackage{amssymb}
\usepackage{amsthm}
\usepackage{mathrsfs}
\usepackage{color}
\usepackage{hyperref}
\usepackage{a4}
\usepackage{fullpage}
\usepackage{url}
\usepackage{hyperref}
\usepackage{graphicx}
\usepackage[nosort]{cite}
\usepackage{enumitem}
\providecommand{\tabularnewline}{\\}
\usepackage{multirow}
\usepackage[title]{appendix}

\setlength{\parindent}{0.3cm}

\newtheorem{theorem}{Theorem}

\newtheorem{example}[theorem]{Example}

\begin{document}
	
	\title{\textbf{Minimal System of Generators and Syzygies of Centro-Affine Invariants and Covariants for Homogeneous Planar Cubic Differential Systems with Free Terms and Linear Part}}
	\author{\textbf{Anis Hezzam}\thanks{%
			Faculty of Mathematics, University of Sciences and Technology Houari
			Boumediene, BP 32 El Alia 16111 Bab Ezzouar Algiers Algeria, E-mail : %
			\url{ahezzam@usthb.dz}} , \textbf{Dahira Dali}\thanks{%
			Faculty of Mathematics, University of Sciences and Technology Houari
			Boumediene, BP 32 El Alia 16111 Bab Ezzouar Algiers Algeria, E-mail : %
			\url{dddahira@gmail.com}}}
		
	\date{}
	\date{\today }
	\maketitle
	
	\begin{abstract}
A minimal system of generators of the algebra of the centro-affine covariants for homogeneous planar cubic differential systems with linear part is known. With the help of the Gurevich theorem avoiding the Aronhold's identities based on the calculation of determinants, we describe the algebra of the centro-affne covariants for homogeneous planar cubic differential systems with free terms.  
	\end{abstract}
	
	\textbf{Key words:} Cubic polynomial differential systems, linear transformation, invariant, covariant, system of generators.
	\bigskip
	
	\textbf{2010 MR Subject Classification:} 34C20, 15A72, 13A50.

\section{Introduction and Preliminaries}

Using Einstein's notation the polynomial differential systems in $n$ unknown
variables and of finite degree with coefficients in a field $\Bbbk $ ($\Bbbk
=%
\mathbb{R}
$ or $
\mathbb{C}
$) can be written as 
\begin{equation}
\frac{dx^{j}}{dt}=\sum_{r\in \Omega }a_{\alpha _{1}\cdots \alpha
	_{r}}^{j}x^{\alpha _{1}}\cdots x^{\alpha _{r}},  \label{system}
\end{equation}%
where $\Omega $ is a finite set of distinct natural numbers and for $%
j=1,...,n$ and $r\in \Omega $ $,$ $a_{\alpha _{1}\cdots \alpha
	_{r}}^{j}x^{\alpha _{1}}\cdots x^{\alpha _{r}}$.

The qualitative investigation of polynomial differential systems by means of
the algebraic invariants is developed by Konstantin Sergeevich Sibirskii
\cite{Dana Schlomiuk} when he assimilated the coefficients of
the polynomial differential systems (\ref{system}) to tensor components,
then made group action on the phase space of these systems and classified
geometric properties of planar quadratic differential systems with the help
of algebraic and semi-algebraic relations in terms of their coefficients.
Many difficult results about differential systems have been obtained with
the help of invariant theory, we refer the readers to  \cite{Sibirskii, Sibirskii1, Sibirskii2}.

The theory of invariants motivated by projective geometry, number theory and algebraic geometry since Johann Carl Friedrich Gauss has published his Latin book "Disquisitiones arithmeticae" in 1801 on the representations of integers observed an invariant behavior in the theory of quadratic forms under the action of $SL(2,\mathbb{C})$ by taking $x$ to $Ax+By$ and $y$ to $Cx+Dy$. Arthur Cayley and James Joseph Sylvester are the first mathematicians who consider the invariants of algebraic forms but they never succeed in calculating it. Many mathematicians are devoted to the invariants of algebraic among them Georges Salmon, Charles Hermite, George Boole, Siegfried Heinrich Aronhold, Rudolf Friedrich Alfred Clebsch and Paul Albert Gordan who succeed in demonstrating the existence of a generating family of the invariants of binary forms of finite degree by using the symbolic calculus invented by Gordan. Fa\`{a} Di Bruno worked on invariants and wrote a book ”Théorie des formes binaires” wich was regarded highly by David Hilbert who solved the fineteness problem of the invariant theory by using Noetherianity in 1890. In 1993, Hilbert had solved the major problems in the invariant theory \cite{DahiraDaliandSuiSunCheng, Gurevich}. 



When we use the invariant theory, the main problem that we are facing
is the knowledge of the invariants, that is \cite{Popov}. In the case where the
action group is the linear general group $GL(n,\Bbbk )$, the $\Bbbk $%
-algebra of the algebraic invariants of systems (\ref{system}) called
centro-affine covariants is of finite type. However, the description of the
algebra is a difficult matter. One has to manipulate polynomial in several
variables. The invariants for bivariate quadratic differential systems are
polynomials in 12 indeterminates. Minimal system of generators of
centro-affine invariants for differential systems where $\Omega =\left \{
0,1,2\right \} $ in \cite{Sibirskii,Vulpe,BoularasandDali} and $%
\Omega =\left \{ 1,3\right \} $ are known in \cite{VMChebanu, Vulpecubic} and
minimal system of $27$ syzygies relating elements of the minimal system of (%
\ref{system}) where $\Omega =\left \{ 0,1,2\right \} $ is found in \cite%
{DanilyukandSibirskii}.

In this paper, using the theorem of Gurevich \cite{Gurevich}, we develop a
constructive to describe the $\Bbbk -$algebra of the centro-affine
covariants for a given family of polynomial differential systems avoiding
Aronhold's identities. This method can be used to determinate
syzygies between the elements of a given minimal system of generators of
centro-affine covariants. Then, find a minimal system of generators of the
centro-affine covariants for the planar cubic differential systems, that is,
systems (\ref{system}) where $\Omega =\left \{ 0,1,3\right \} $ and a
corresponding minimal generator system of the syzygies for these
differential systems. The syzygies between the relating elements of the
minimal system of a given minimal system of generators of centro-affine
covariants for a given polynomial differential systems allows us to have a
unique decomposition of centro-affine covariants of the considered
differential systems.

We denote by $\mathcal{C}(n,\Bbbk ,\Omega )$ the set of all coefficients on
the right hand side of polynomial differential systems (\ref{system}).
Denote by $\mathcal{S}(n,\Bbbk ,\Omega )$ and can be identified as a direct
sum $\bigoplus {}_{r\in \Omega }\mathcal{T}_{r}^{1},$ where $\mathcal{T}%
_{r}^{1}$ denotes the $\Bbbk $-vectorial space of tensors $1$ time
contravariant and $r$ times covariant for these systems (\ref{system}).

The action of $GL(n,\Bbbk )$ on the plane $(q,x)\mapsto qx$ induces a
representation $\rho $ on $GL(\mathcal{C}(n,\Bbbk ,\Omega ))$ defined for
all $r\in \Omega $ by

\begin{equation}
\rho (q)a_{\alpha _{1}\alpha _{2}...\alpha _{r}}^{j}=q_{i}^{j}p_{\alpha
	_{1}}^{\beta _{1}}...p_{\alpha _{r}}^{\beta _{r}}a_{\beta _{1}\beta
	_{2}...\beta r}^{i} , \label{TransfLows}
\end{equation}
where $p$ is the inverse of the matrix $q$ called lows of centro-affine
transformations.\\
 A polynomial function $C(a,x):\mathcal{C}(n,\Bbbk ,\Omega
)\times \Bbbk ^{n}\rightarrow $ $\Bbbk $ is said to be a covariant with
respect to $GL(n,\Bbbk )$ or a centro-affine covariant of $\mathcal{C}%
(n,\Bbbk ,\Omega )\times \Bbbk ^{n}$ or simply a centro-affine covariant for 
$\mathcal{S}(n,\Bbbk ,\Omega )$ if

 $$\forall q\in GL(n,\Bbbk ),\forall a\in 
\mathcal{C}(n,\Bbbk ,\Omega ),\; C(\rho (q)a,qx)=(\det q)^{-w}C(a,x),$$  where $(\det
q)^{-w}$ is the character of $GL(n,\Bbbk )$ and $w\in 
\mathbb{Z}
$ is called the weight of $C(a,x).$ If $C(a,x)$ is constant with respect to $%
x$, it is called centro-affine invariant and written $C(a)$.

If $w \equiv 1$,\ the covariant is said to be
absolute, otherwise it is said to be relative.

A G-covariant $C(a,x)$ is said to be reducible if it can expressed as
polynomial function of G-covariants of lower degree, we write $C(a,x)\equiv 0$
(modulo G-covariants of lower degree). A finite family $\mathcal{B}$ of
centro-affine covariants for $\mathcal{S}(n,\Bbbk ,\Omega )$ is called a
system of generators if any centro-affine covariant for $\mathcal{S}(n,\Bbbk ,\Omega )$ is reducible to zero modulo $\mathcal{B}$. A system $\mathcal{B}
$ of generators is said to be minimal if none of them is generated by the
others.

\begin{example}
	\label{expcovariant}
	
	We can check with the help of the lows of centro-affine transformations (\ref%
	{TransfLows}) that $C(a)=a_{\alpha }^{\alpha }$ is a centro-adffine
	invariant for differential systems $\mathcal{S}(n,\Bbbk
	,\{0,1,2,...,k\})$ of weight $w=0$, $k\in 
	\mathbb{N}
	$. Indeed, for all $q\in GL(2,%
	\mathbb{R}
	)$, $C(\rho (q)a)=\rho (q)a_{\alpha }^{\alpha }=q_{i}^{\alpha }p_{\alpha
	}^{j}a_{j}^{i}=$ $\delta _{i}^{j}a_{j}^{i}=a_{i}^{i}=C(a)$, where $\alpha
	,i,j=1,...,n$. Some other examples are given in the following table:%
	

	\begin{center}
	
	\begin{tabular}{|c|c|c|}
		\hline 
		$\mathcal{C}(n,\Bbbk,\Omega)$ & Covariant & Weight\tabularnewline			\hline 
		\hline 
		\multirow{3}{*}{$\mathcal{C}(2,\mathbb{{R}},\{1,2\})$} & $a_{\alpha}^{\alpha},\alpha=1,2$ & $0$\tabularnewline
		\cline{2-3} 
		& $a_{p}^{r}a_{q}^{s}\varepsilon^{pq}\varepsilon_{rs},p,q,r,s=1,2$ & $0$\tabularnewline
		\cline{2-3} 
		& $a_{\gamma}^{\alpha}a_{\alpha p}^{\beta}a_{\beta q}^{\gamma}\varepsilon^{pq},\alpha,\beta,\gamma,p,q=1,2$ & $1$\tabularnewline
		\hline 
		\multirow{2}{*}{$\mathcal{C}(3,\mathbb{{R}},\{1,2\})$} & $a_{\alpha}^{\alpha},\alpha=1,3$ & $0$\tabularnewline
		\cline{2-3} 
		& $a_{\gamma p}^{\alpha}a_{\alpha q}^{\beta}a_{\beta s}^{\gamma}\varepsilon^{pqs},\alpha,\beta,\gamma,p,q,s=1,3$ & $1$\tabularnewline
		\hline 
		\multirow{2}{*}{$\mathcal{C}(2,\mathbb{{R}},\{0,1,3\})$} & $a^{\alpha}a^{\beta}a_{\nu}^{\gamma}a_{\mu\delta\gamma}^{\delta}a_{\beta pr}^{\mu}a_{\alpha qs}^{\nu}\varepsilon^{pq}\varepsilon^{rs}$, & \multirow{2}{*}{$2$}\tabularnewline
		& $\alpha,\beta,\gamma,\delta,\mu,\nu,p,q,r,s=1,2$ & \tabularnewline
		\hline 
		$\mathcal{C}(2,\mathbb{{R}},\{0,3\})$ & $a^{\alpha}a^{\beta}a^{\gamma}a^{p}a_{\alpha\beta\gamma}^{q}\varepsilon_{pq},\alpha,\beta,\gamma,p,q=1,2$ & $-1$\tabularnewline
		\hline 
	\end{tabular}
\end{center}



\end{example}

Let $E$ be a $\Bbbk $-vector space of dimension $n$, and let $p$ and $q$ be
two non negative integers.

A contraction over the tensor space $E^{\otimes p}\otimes E^{\ast \otimes q}$
is the map:%
\begin{equation*}
\varphi :E^{\otimes p}\otimes E^{\ast \otimes q}\rightarrow E^{\otimes
	p-1}\otimes E^{\ast \otimes q-1},
\end{equation*}%
defined by%
\begin{equation*}
\varphi (\xi _{j_{1}...j_{q}}^{i_{1}...i_{p}})=\sum_{\alpha =1}^{n}\xi
_{j_{1}...j_{l-1\alpha }j_{l+1}...j_{q}}^{i_{1}...i_{m-1}\alpha
	i_{m+1}...i_{p}}.
\end{equation*}

If $p=q$ then a sequence of $p$ contractions is called a complete
contraction.

A covariant alternation over the tensor space $E^{\otimes p}\otimes E^{\ast
	\otimes q}$ with $p,q\geq n$ is the map: 
\begin{equation*}
\phi :E^{\otimes p}\otimes E^{\ast \otimes q}\rightarrow E^{\otimes
	p}\otimes E^{\ast \otimes q-n},
\end{equation*}%
defined by%
\begin{equation*}
\phi (\xi
_{j_{1}...j_{q}}^{i_{1}...i_{p}})=\sum_{k_{1}=1}^{n}\sum_{k_{2}=1}^{n}...%
\sum_{k_{n}=1}^{n}\xi _{j_{1}...\alpha _{_{k_{1}}...}\alpha
	_{k_{2}}...\alpha _{k_{n}}...j_{q}}^{i_{1}...i_{p}}\varepsilon ^{\alpha
	_{k_{1}}\alpha _{k_{2}}...\alpha _{k_{n}}},
\end{equation*}%
and a contravariant alternation over the tensor space $E^{\otimes p}\otimes
E^{\ast \otimes q}$ with $p,q\geq n$ is the map:%
\begin{equation*}
\psi :E^{\otimes p}\otimes E^{\ast \otimes q}\rightarrow E^{\otimes
	p-n}\otimes E^{\ast \otimes q},
\end{equation*}%
defined by%
\begin{equation*}
\psi (\xi
_{j_{1}...j_{q}}^{i_{1}...i_{p}})=\sum_{k_{1}=1}^{n}\sum_{k_{2}=1}^{n}...%
\sum_{k_{n}=1}^{n}\xi _{j_{1}...j_{q}}^{i_{1}...\alpha _{_{k_{1}}...}\alpha
	_{k_{2}}...\alpha _{k_{n}}...i_{p}}\varepsilon _{\alpha _{k_{1}}\alpha
	_{k_{2}}...\alpha _{k_{n}}},
\end{equation*}%
where the tensor $\varepsilon _{\alpha _{k_{1}}\alpha _{k_{2}}...\alpha
	_{k_{n}}}(\varepsilon ^{\alpha _{k_{1}}\alpha _{k_{2}}...\alpha _{k_{n}}})$
with $\alpha _{k_{1}},\alpha _{k_{2}}...,\alpha _{k_{n}}=1,2,...,n$ is an $%
n $-vector, the valence of which coincides with the dimension of the space
and the coordinates of which are equal to%

$$
\varepsilon _{\alpha _{k_{1}}...\alpha _{k_{n}}}=\varepsilon ^{\alpha _{k_{1}}...\alpha _{k_{n}}}= \left \{ 
\begin{array}{cc}
\;\;1 & \text{if }(\alpha _{k_{1}},...,\alpha _{k_{n}})\text{ is an even
	permutation} \\ 
-1 & \text{if }(\alpha _{k_{1}},...,\alpha _{k_{n}})\text{ is an odd
	permutation} \\ 
\;\;0 & \text{ otherwise}%
\end{array}%
\right. .
$$

The $\Bbbk $-algebra $\Bbbk \lbrack \mathcal{C}(n,\Bbbk ,\Omega )\times
\Bbbk ^{n}]^{GL(n,\Bbbk )}$ is of finite type \cite{Hilbert} and in view of
the fundamental theorem of Gurevich \cite{Gurevich} it is generated by
polynomial expressions obtained by applying successive alternations and
complete contraction over the tensor product 
\begin{equation}
\left( \mathcal{T}_{r_{1}}^{1}\right) ^{\ast \otimes d_{r_{1}}}\otimes
...\otimes \left( \mathcal{T}_{r_{i}}^{1}\right) ^{\ast \otimes
	d_{r_{i}}}\otimes \left( \Bbbk ^{n}\right) ^{\ast \otimes \delta },
\label{tensProd}
\end{equation}%
where $r_{1},...,r_{i}\in \Omega ,$ $p=d_{r_{1}}+...+d_{r_{i}}+\delta $, $%
q=(r_{1}-1)d_{r_{1}}+...+(r_{i}-1)d_{r_{i}}$ and 
\begin{equation}
d_{r_{1}}+....+d_{r_{i}}-d_{0}-\delta \equiv 0[n] \label{ConInv}
\end{equation}

This motivate the following definition. An homogeneous centro-affine
covariant $C(a,x)$ for systems $\mathcal{S}(n,\Bbbk ,\Omega )$ is said to be
of type $(d_{r_{1}},...,d_{r_{i}},\delta )$ if and only if it is homogeneous of degree $d_{r_{i}}$ in relation to $%
a_{\alpha _{r_{1}}...\alpha _{r_{i}}}^{j},r_{i}\in \Omega $ and of degree $%
\delta $ in relation to the contravariant vector $x$, and satisfying the relation(\ref%
{ConInv}), here $r_{1},...,r_{i}\in \Omega
,\;d_{r_{1}},...,d_{r_{i}},\delta \in 
\mathbb{N}
$. A centro-affine covariant of type $(d_{r_{1}},...,d_{r_{i}},\delta )$ is said to be of degree $%
d=\delta +d_{r_{1}}+...+d_{r_{i}}$.

Hence, the generators of centro-affines covariants for $\mathcal{S}%
(n,\Bbbk ,\Omega )$ of given type $(d_{r_{1}},...,d_{r_{i}},\delta )$ can
be obtained with the help of successive alternations and complete
contraction from the tensor $t_{\beta _{1}...\beta _{q}}^{\alpha
	_{1}...\alpha _{p}}$, $p$ times contravariants and $q$ times covariants
belonging in the tensor product (\ref{tensProd}) where $%
p=d_{r_{1}}+...+d_{r_{i}}+\delta $ and $q=d_{r_{1}}+...+(r_{i}-1)d_{r_{i}}$
defined by

\begin{equation}
a_{\alpha _{1}...\alpha _{r_{1}}}^{i_{1}}\otimes ...\otimes a_{\beta
	_{1}...\beta _{r_{1}}}^{i_{d_{r_{1}}}}\otimes ...\otimes a_{\gamma
	_{1}...\gamma _{r_{i}}}^{_{j_{_{1}}}}\otimes ...\otimes a_{\delta
	_{1}...\delta _{r_{i}}}^{j_{d_{r_{i}}}}\otimes x^{m_{1}}\otimes ...\otimes
x^{m_{\delta }},  \label{tensor}
\end{equation}%
where $r_{1},...,r_{i}\in \Omega ,d_{r_{1}},...,d_{r_{i}},\delta \in 
\mathbb{N}
$ and the indices are belonging in $\left \{ 1,...,n\right \} $ and can be
written

\begin{equation}
\underset{\text{ }d_{r_{1}}\text{times}}{\underbrace{a_{\underset{\text{ }%
				r_{1}\text{times}}{\underbrace{\__{...}\_}}}^{-}...a_{\underset{\text{ }r_{1}%
				\text{times}}{\underbrace{\__{...}\_}}}^{-}}}...\underset{\text{ }d_{r_{i}}%
	\text{times}}{\underbrace{a_{\underset{\text{ }r_{i}\text{times}}{%
				\underbrace{\__{...}\_}}}^{-}...a_{\underset{\text{ }r_{i}\text{times}}{%
				\underbrace{\__{...}\_}}}^{-}}}.\label{tonsorfurmula}
\end{equation}

For instance, the generators of centro-affine covariants for homogeneous
planar quadratic differential systems $\mathcal{S}(2,%
\mathbb{R}
,\left \{ 2\right \} )$ of type $(d_{2},\delta )$ can be obtained with the
help of successive alternations and complete contraction from the tensor $%
(d_{2}+\delta )$ times contravariants and ($2d_{2})$ times covariants where $%
d_{2}-\delta \equiv 0[3]$ defined by

\begin{equation*}
a_{\alpha _{1}\alpha _{2}}^{l_{1}}\otimes ...\otimes a_{\beta _{1}\beta
	_{2}}^{l_{d_{2}}}\otimes x^{m_{1}}\otimes ...\otimes x^{m_{\delta }},
\end{equation*}%

where the indices belonging in $\left \{ 1,2\right \} $ and can be written

\begin{equation*}
\underset{\text{ }d_{2}\text{times}}{\underbrace{a_{\_ \text{ }%
			\_}^{-}...a_{\_ \text{ }\_}^{-}}}\underset{\text{ }\delta \text{times}}{%
	\underbrace{x^{-}...x^{-}}}.
\end{equation*}%
The generators of type $(1,1)$ can be obtained from the tensor $a_{\_ \text{ 
	}\_}^{-}x^{-}$ with the help of successive alternations and complete
contraction:

\begin{equation*}
\begin{array}{ccc}
a_{\_ \text{ }\_}^{-}x^{-} & \longmapsto & a_{\alpha \beta }^{\alpha }x.
\end{array}%
\end{equation*}%
The generators of type $(2,2)$ can be obtained from the tensor $a_{\_ \text{ 
	}\_}^{-}a_{\_ \text{ }\_}^{-}x^{-}x^{-}$ with the help of successive
alternations and complete contraction:

\begin{equation*}
\begin{array}{ccc}
a_{\_ \text{ }\_}^{-}a_{\_ \text{ }\_}^{-}x^{-}x^{-} & \longmapsto & \left
\{ 
\begin{array}{c}
a_{\alpha \gamma }^{\alpha }a_{\beta \delta }^{\beta }x^{\gamma }x^{\delta }
\\ 
\\ 
a_{\beta \gamma }^{\alpha }a_{\alpha \delta }^{\beta }x^{\gamma }x^{\delta }
\\ 
\\ 
a_{\alpha \beta }^{\alpha }a_{\gamma \delta }^{\beta }x^{\gamma }x^{\delta }%
\end{array}%
\right.%
\end{array}%
\end{equation*}%
The generators of type $(2,4)$ can be obtained from the tensor $a_{\_ \text{ }%
	\_}^{-}a_{\_ \text{ }\_}^{-}x^{-}x^{-}x^{-}x$ with the help of successive
alternations and complete contraction:

\[
\begin{array}{ccc}
a_{\_\text{ }\_}^{-}a_{\_\text{ }\_}^{-}x^{-}x^{-}x^{-}x^{-} & \longmapsto & \left\{ \begin{array}{c}
a_{\alpha\gamma}^{p}a_{\beta\delta}^{q}x^{\alpha}x^{\beta}x^{\gamma}x^{\delta}\varepsilon_{pq}\\
\\

a_{\alpha\gamma}^{p}a_{\beta\delta}^{\alpha}x^{q}x^{\beta}x^{\gamma}x^{\delta}\varepsilon_{pq}\\
\\

a_{\beta\gamma}^{p}a_{\alpha\delta}^{\alpha}x^{q}x^{\beta}x^{\gamma}x^{\delta}\varepsilon_{pq}
\end{array}\right.\end{array}
\]

where $\alpha ,\beta ,\gamma ,\delta ,p,q=1,2$ and $\varepsilon _{pq}=\sigma
(p,q)=p-q.$

The generators of centro-affine covariants for homogeneous planar quadratic
differential systems $\mathcal{S}(3,%
\mathbb{R}
,\left \{ 2\right \} )$ of type $(d_{2},\delta )$ can be obtained with the
help of successive alternations and complete contraction from the tensor $%
(d_{2}+\delta )$ times contravariants and ($2d_{2})$ times covariants where $%
d_{2}-\delta \equiv 0[3]$ defined by

\begin{equation*}
a_{\alpha _{1}\alpha _{2}}^{l_{1}}\otimes ...a_{\alpha _{1}\alpha
	_{2}}^{l_{1}}\otimes x^{m_{1}}\otimes ...\otimes x^{m_{\delta }},
\end{equation*}%
where the indices belonging in $\left \{ 1,2,3\right \} $ and can be written

\begin{equation*}
\underset{\text{ }d_{2}\text{times}}{\underbrace{a_{\_ \text{ }%
			\_}^{-}...a_{\_ \text{ }\_}^{-}}}\underset{\text{ }\delta \text{times}}{%
	\underbrace{x^{-}...x^{-}}}.
\end{equation*}%
The generators of type $(1,1)$ can be obtained from the tensor $a_{\--}^{-}x^{-}$ with the help of successive alternations and complete
contraction:

\[
\begin{array}{ccc}
a_{\_\text{ }\_}^{-}x^{-} & \longmapsto &a_{\alpha\beta}^{\alpha}x^{\beta},\end{array}\; a_{\beta\beta}^{\alpha}x^{\alpha},\; a_{q\alpha}^{p}x^{\alpha}\varepsilon^{pq},\; a_{\alpha\alpha}^{p}x^{q}\varepsilon^{pq},\; a_{\alpha p}^{\alpha}x^{q}\varepsilon^{pq}
\]

The generators of type $(2,0)$ can be obtained from the tensor  
 $a_{--}^{-}a_{--}^{-}$ with the help of successive alternations
and complete contraction\cite[page 88]{Sibirskii}:

\[
\begin{array}{lcl}
a_{\_\text{ }\_}^{-}a_{\_\text{ }\_}^{-} & \longmapsto & \left\{ \begin{array}{cc}
a_{\alpha\beta}^{\alpha}a_{\gamma\gamma}^{\beta}, &   a_{\beta\gamma}^{\alpha}a_{\beta\gamma}^{\alpha}\\
\\
a_{\alpha\gamma}^{\alpha}a_{\beta\gamma}^{\beta}, &   a_{\beta\gamma}^{\alpha}a_{\gamma\alpha}^{\beta}\\
\\
a_{\beta\beta}^{\alpha}a_{\gamma\gamma}^{\alpha},
\end{array}\right.\\
\\

a_{\_\text{ }\_}^{-}a_{\_\text{ }\_}^{-}\varepsilon^{--} & \longmapsto & \left\{ \begin{array}{c}
a_{\alpha p}^{\alpha}a_{\beta\beta}^{q}\varepsilon^{pq}\\
\\

a_{\beta p}^{\alpha}a_{\beta\alpha}^{q}\varepsilon^{pq}
\end{array}\right.
\end{array}
\]
where $\alpha ,\beta ,\gamma ,\delta ,p,q=1,2,3$ and $\varepsilon
_{pqr}=\sigma (p,q,r)$.

\section{Constructive method to describe the Algebra of Centro-Affine
	Covariants for $\mathcal{S}(n,\Bbbk ,\Omega )$}

The $\Bbbk $-algebra $\Bbbk \lbrack \mathcal{C}(n,\Bbbk ,\Omega )\times
\Bbbk ^{n}]^{GL(n,\Bbbk )}$ is multigraded and can be written

\begin{equation*}
\Bbbk \lbrack \mathcal{C}(n,\Bbbk ,\Omega )\times \Bbbk ^{n}]^{GL(n,\Bbbk
	)}=\bigoplus {}_{d_{r_{1}},...,d_{r_{i}},\delta \in \mathbb{N}}\mathcal{A}_{(d_{r_{1}},...,d_{r_{i_{0}}},\delta )},
\end{equation*}%
where  $\mathcal{A}_{(d_{r_{1}},...,d_{r_{i}},\delta )}$ denotes the
vectorial subspace of the homogeneous centro-affie covariants of type $%
(d_{r_{1}},...,d_{r_{i}},\delta ),\; r_{1},...,r_{i}\in \Omega
,\; d_{r_{1}},...,d_{r_{i}},\delta \in 
\mathbb{N}
$.

Indeed, the centro-affine covariant $C(a,x)$ for $\mathcal{S}(n,\Bbbk ,\Omega )$
can be decomposed as finite sum 
\begin{equation*}
C(a,x)=\sum_{i=1,...,s}C_{i}(a,x),
\end{equation*}%
where for $i=1,...,s$, $C_{i}(a,x)$ is a homogeneous polynomial in $\Bbbk
\lbrack \mathcal{C}(n,\Bbbk ,\Omega )\times \Bbbk ^{n}]$ of degree $d_{i}$
and for \ $i=1,...,s,$ $C_{i}(a,x)$ is a centro-affine covariant since for
all $q\in GL(n,\Bbbk )$ one has

\begin{equation*}
C(\rho (q)a,qx)=(\det q)^{-w}C(a,x),
\end{equation*}%
where $w$ be the weight of the centro-affine covariant $C(a,x)$. Therefore, 
\begin{equation*}
C(\rho (q)a,qx)=(\det q)^{-w}\sum_{i=1,...,s}C_{i}(a,x)
\end{equation*}%
or
\begin{equation*}
C(\rho (q)a,qx)=\sum_{i=1,...,s}(\det q)^{-w}C_{i}(a,x)
\end{equation*}%
and for all scalar $\lambda \in $ $\Bbbk ^{\ast }$, $C_{i}(a,x)=\lambda
^{-d_{i}}C_{i}(\lambda a,\lambda x)$, $i=1,...,s$ . Then

\begin{equation*}
\sum_{i=1,...,s}\lambda ^{d_{i}}C_{i}(\rho (q)a,qx)=\sum_{i=1,...,s}(\det
q)^{-w}\lambda ^{d_{i}}C_{i}(a,x).
\end{equation*}%
Hence, for \ $i=1,...,s,$, we get

\begin{equation*}
C_{i}(\rho (q)a,qx)=(\det q)^{-w}C_{i}(a,x).
\end{equation*}

The set of the homogeneous centro-affine covariants of degree $d$ denoted by 
$\Bbbk \lbrack \mathcal{C}(n,\Bbbk ,\Omega )\times \Bbbk
^{n}]_{d}^{GL(n,\Bbbk )}$ is a vectorial subspace of the $\Bbbk $-algebra $%
\Bbbk \lbrack \mathcal{C}(n,\Bbbk ,\Omega )\times \Bbbk ^{n}]^{GL(n,\Bbbk )}$
and for $d,d^{\prime }\in 
\mathbb{N}
$, we have

\begin{equation*}
\Bbbk \lbrack \mathcal{C}(n,\Bbbk ,\Omega )\times \Bbbk
^{n}]_{d}^{GL(n,\Bbbk )}\Bbbk \lbrack \mathcal{C}(n,\Bbbk ,\Omega )\times
\Bbbk ^{n}]_{d^{\prime }}^{GL(n,\Bbbk )}\subset \Bbbk \lbrack \mathcal{C}%
(n,\Bbbk ,\Omega )\times \Bbbk ^{n}]_{d+d^{\prime }}^{GL(n,\Bbbk )}.
\end{equation*}%
Hence, the $\Bbbk -$algebra $\Bbbk \lbrack \mathcal{C}(n,\Bbbk ,\Omega
)\times \Bbbk ^{n}]^{GL(n,\Bbbk )}$ can be written

\begin{equation*}
\Bbbk \lbrack \mathcal{C}(n,\Bbbk ,\Omega )\times \Bbbk ^{n}]^{GL(n,\Bbbk
	)}=\bigoplus {}_{d\in \mathbf{N}}\Bbbk \lbrack \mathcal{C}(n,\Bbbk ,\Omega
)\times \Bbbk ^{n}]_{d}^{GL(n,\Bbbk )}
\end{equation*}
and $\Bbbk \lbrack \mathcal{C}(n,\Bbbk ,\Omega )\times \Bbbk
^{n}]_{d}^{GL(n,\Bbbk )}$ is a direct sum of the vectorial subspaces $%
\mathcal{A}_{(d_{r_{1}},...,d_{r_{i}},\delta )}$ where $%
d_{r_{1}},...,d_{r_{i}},\delta \in 
\mathbb{N}
$

\begin{equation*}
\Bbbk \lbrack \mathcal{C}(n,\Bbbk ,\Omega )\times \Bbbk
^{n}]_{d}^{GL(n,\Bbbk )}=\bigoplus {}_{\substack{ d_{r_{1}},...,d_{r_{i}},%
		\delta \in 
		\mathbb{N}
		\\ \delta +\sum_{r_{i}\in \Omega }d_{r_{i}}=d}}\mathcal{A}%
_{(d_{r_{1}},...,d_{r_{i}},\delta )}.
\end{equation*}%
It's easy to check that

\begin{equation*}
\mathcal{A}_{(d_{r_{1}},...,d_{r_{i}},\delta )}\mathcal{A}%
_{(d_{r_{1}^{\prime }},...,d_{r_{i}^{\prime }},\delta ^{\prime })}\subseteq 
\mathcal{A}_{(d_{r_{1}},...,d_{r_{i}},\delta )+(d_{r_{1}^{\prime
	}},...,d_{r_{i}^{\prime }},\delta ^{\prime })}.
\end{equation*}%
Thus, the $\Bbbk -$algebra $\Bbbk \lbrack \mathcal{C}(n,\Bbbk ,\Omega
)\times \Bbbk ^{n}]^{GL(n,\Bbbk )}$ is multigraded and can be written

\begin{equation*}
\Bbbk \lbrack \mathcal{C}(n,\Bbbk ,\Omega )\times \Bbbk ^{n}]^{GL(n,\Bbbk
	)}=\bigoplus {}_{d_{r_{1}},...,d_{r_{i}},\delta \in 
	\mathbb{N}
}\mathcal{A}_{(d_{r_{1}},...,d_{r_{i}},\delta )}.
\end{equation*}%
Let $\mathcal{F}_{(d_{r_{1}},...,d_{r_{i}},\delta )}$ and $\mathcal{B}%
_{(d_{r_{1}},...,d_{r_{i}},\delta )}$ be respectively generating family and
a basis of the vectorial subspace $\mathcal{A}_{(d_{r_{1}},...,d_{r_{i}},%
	\delta )}$ of a given type $(d_{r_{1}},...,d_{r_{i}},\delta ).$

In view of Hilbert basis theorem, $\Bbbk -$algebra $\Bbbk \lbrack 
\mathcal{C}(n,\Bbbk ,\Omega )\times \Bbbk ^{n}]^{GL(n,\Bbbk )}$ is of finite
type and then the degrees of centro-affine covariants of a minimal system $%
\mathcal{B}(n,\Bbbk ,\Omega )$ of generators for $\mathcal{S}(n,\Bbbk
,\Omega )$ are bounded and let $D$ be the upper bound of degrees of these
generators.

The idea is to determine degree by degree a minimal system $\mathcal{B}%
(n,\Bbbk ,\Omega )$ for given differential systems $\mathcal{S}(n,\Bbbk
,\Omega )$ where constructing degree by degree the generating families $%
\mathcal{F}_{(d_{r_{1}},...,d_{r_{i}},\delta )}$, deduce bases $\mathcal{B}%
_{(d_{r_{1}},...,d_{r_{i}},\delta )}$ avoiding Aronhold's identities, then
determine a minimal system $\mathcal{B}(n,\Bbbk ,\Omega )$ with the help of
the test of membership in an ideal.

In view of the theorem of Gurevich, the generators of centro-affine
covariants for given differential systems $\mathcal{B}(n,\Bbbk ,\Omega )$ of
given type $(d_{r_{1}},...,d_{r_{i}},\delta ),r_{1},...,r_{i},\delta \in
\Omega $  obtained from the tensor \ref{tonsorfurmula} with the help of
successive alternations and complete contraction can be written as finite
products $f_{1}^{\alpha _{1}}...f_{l}^{\alpha _{l}},\alpha _{1},...,\alpha
_{l},l\in 
\mathbb{N}
$ of generators of lower degrees $f_{1},...,f_{l}$ where $%
(d_{r_{1}},...,d_{r_{l}},\delta )=\alpha
_{1}(d_{r_{1}}^{1},...,d_{r_{l}}^{1},\delta ^{1})+...+\alpha
_{l}(d_{r_{1}}^{l},...,d_{r_{l}}^{l},\delta ^{l})$ where $\alpha
_{1},...,\alpha _{1}\in 
\mathbb{N}
,$ $r_{1},...,r_{l}\in \Omega $ and for $i=1,...,l$, $f_{i}$ is of type $%
(d_{r_{i}}^{i},...,d_{r_{i}}^{i},\delta ^{i})$.

For instance, the generators of degree $d$ of centro-affine covariants for
systems $\mathcal{S}(2,\mathbb{R},\left \{ 1,1,2\right \} )$ are of type $%
(d_{0},d_{1},d_{2},\delta )$ where $d_{0}+d_{1}+d_{2}+\delta =d$ and $%
d_{2}-d_{0}-\delta \equiv 0[2]$ . Since every generating family contain
generators of lower degrees then start from the generating families $%
\mathcal{F}_{(d_{0},d_{1},d_{2},\delta )}$ of degree $1.$ The only
generators of degree $1$ are of type $(0,1,0,0)$ and can be obtained from
the tensor once contravariant and once covariant $a_{-}^{-}$ with the help
of successive tensorial operations of contraction on the
tensor from the tensor. Then obtain $ \mathcal{F}_{(0,1,0,0)}=\{ I_{1} \}$.

The only generators of degree $2$ are of type $(1,0,1,0)$, $(0,2,0,0)$, $(0,0,1,1)$
and can be obtained respectively from the tensors $%
a^{-}a_{--}^{-}$, $a_{-}^{-}a_{-}^{-}$ and $a_{--}^{-}x^{-} :$ 

\begin{equation*}
\begin{array}{lcll}
a^{-}a_{--}^{-} & \mapsto  & a^{\alpha }a_{\alpha \beta }^{\beta } & = I_{17}\\ 
a_{-}^{-}a_{-}^{-} & \mapsto  & a_{\alpha }^{\alpha }a_{\beta }^{\beta } & = I_{1}^{2} \\ 
&  & a_{\beta }^{\alpha }a_{\alpha }^{\beta } & = I_{2} \\ 
a^{-}a_{--}^{-} & \mapsto  & a_{\alpha \beta }^{\alpha }x^{\beta } & = K_{1}%
\end{array}%
\end{equation*}%
then obtain $\mathcal{F}_{(1,0,1,0)},\;\mathcal{F}_{(0,2,0,0)},\;\mathcal{F}%
_{(0,0,1,1)}$ where

\begin{itemize}[leftmargin=2cm] 
	\item[$\mathcal{F}_{(1,0,1,0)}=$] $\{ I_{17} \} $. 
	
	\item[$\mathcal{F}_{(0,2,0,0)}=$] $\{ I_{1}^{2} $, $I_{2} \} $.
	
	\item[$\mathcal{F}_{(0,0,1,1)}=$] $\{ K_{1} \}$.
\end{itemize}

In the same manner we found,

\begin{itemize}[leftmargin=2cm] 

\item[ $\mathcal{F}_{(0,1,1,1)}$ = ]$\{$$I_{1}K_{1}$, $K_{3}$, $K_{4}$$ \} $.
	
\item[ $\mathcal{F}_{(1, 2, 1, 0)}$ = ]$ \{$$I_{1}^{2}I_{17}$, $I_{1}I_{20}$, $I_{1}I_{19}$, $I_{2}I_{17}$, $I_{24}$ 	
	
\item[ $\mathcal{F}_{(1,1,3,0)}$ = ]$ \{$$I_{1}I_{26}$, $I_{1}I_{25}$, $I_{5}I_{17}$, $I_{4}I_{17}$, $I_{3}I_{17}$, $I_{32}$, $I_{31}$, $I_{30}$$ \} $.

\item[ $\mathcal{F}_{(2,3,2,0)}$ = ]$ \{$$I_{1}^{3}I_{17}^{2}$, $I_{1}^{3}I_{23}$, $I_{1}^{3}I_{22}$, $I_{1}^{2}I_{17}I_{20}$, $I_{1}^{2}I_{17}I_{19}$, $I_{1}^{2}I_{29}$, $I_{1}^{2}I_{28}$, $I_{1}I_{2}I_{17}^{2}$, $I_{1}I_{2}I_{23}$, $I_{1}I_{2}I_{22}$,  
$I_{1}I_{5}I_{18}$, $I_{1}I_{4}I_{18}$, $I_{1}I_{3}I_{18}$, $I_{1}I_{17}I_{24}$, $I_{1}I_{20}^{2}$, $I_{1}I_{19}I_{20}$, $I_{1}I_{19}^{2}$, $I_{2}I_{17}I_{20}$, $I_{2}I_{17}I_{19}$, $I_{2}I_{29}$,  
$I_{2}I_{28}$, $I_{6}I_{18}$, $I_{20}I_{24}$, $I_{19}I_{24}$$ \} $ .
	
\item[ $\mathcal{F}_{(2,2,2,0)}$ = ]$ \{$$I_{1}^{2}I_{17}^{2}$, $I_{1}^{2}I_{23}$, $I_{1}^{2}I_{22}$, $I_{1}I_{17}I_{20}$, $I_{1}I_{17}I_{19}$, $I_{1}I_{29}$, $I_{1}I_{28}$, $I_{2}I_{17}^{2}$, $I_{2}I_{23}$, $I_{2}I_{22}$,  
$I_{5}I_{18}$, $I_{4}I_{18}$, $I_{3}I_{18}$, $I_{17}I_{24}$, $I_{20}^{2}$, $I_{19}I_{20}$, $I_{19}^{2}$ $ \} $.

\end{itemize}

where the family $\left\{I_{1},\ldots ,I_{36},K_{1},\ldots ,K_{33}\right\} $ is a minimal system of generators of centro-affine invariants and covariants of $ \mathcal{S}(2,\mathbb{R},\left \{ 0,1,2\right \} ) $\cite{Sibirskii,BoularasandDali,Vulpe}.

For a given type $(d_{r_{1}},...,d_{r_{i}},\delta ),r_{1},...,r_{i}\in \Omega ,$ $%
d_{r_{1}},...,d_{r_{i}},\delta \in 
\mathbb{N}
,$ a basis $\mathcal{B}_{(d_{r_{1}},...,d_{r_{i}},\delta )}$ is a subfamily $%
\left \{ f_{s_{1}},\cdot \cdot \cdot ,f_{s_{t}}\right \} ,1\leq t\leq s$ of $%
\mathcal{F}_{(d_{r_{1}},...,d_{r_{i}},\delta )}$ where $f_{s_{1}},\cdot
\cdot \cdot ,f_{s_{t}}$ are linearly independent and $\left \langle \left \{
f_{l_{1}}\cdot \cdot \cdot ,f_{l_{t}}\right \} \right \rangle =\left \langle
\left \{ f_{1}\cdot \cdot \cdot ,f_{s}\right \} \right \rangle $. We shall show
how the elements $f_{s_{1}},\cdot \cdot \cdot ,f_{s_{t}}$ of $\mathcal{B}%
_{(d_{r_{1}},...,d_{r_{i}},\delta )}$ can be determined. Denoting by $%
(t_{r}^{1})^{p_{r}}$ the product $\prod \limits_{i=1,...,\nu
}((t_{r}^{1})^{i})^{p_{r}^{i}}$ and by $x^{\alpha }$ the product $%
(x^{1})^{\alpha _{1}}...(x^{n})^{\alpha _{n}}$ where $\nu =\dim \mathcal{T}%
_{r}^{1},$ $r\in \Omega ,$ $p_{r},\alpha \in 
\mathbb{N}
,$ $(p_{r}^{i},...,p_{r}^{i})$ and $(\alpha _{1},...,\alpha _{n})$ are the
partitions respectively of $p_{r}$ and $\alpha .$ 

A monomial associated with 
$\mathcal{S}(n,\Bbbk ,\Omega )$ is a finite product of the form

\begin{equation*}
(t_{r_{1}}^{1})^{p_{r_{1}}}...(t_{r_{i}}^{1})^{p_{r_{i}}}x^{\alpha
},r_{1},...,r_{i}\in \Omega \text{ and }p_{r_{1}},...,p_{r_{i}}\in 
\mathbb{N}%
\end{equation*}%
and can be written simply

\begin{equation}
 (a_{\alpha _{1}...\alpha _{r_{1}}}^{i})^{p_{r_{1}}}...(a_{\alpha
	_{1}...\alpha _{r_{i}}}^{j})^{p_{r_{i}}}x^{\alpha }   \label{monom}
\end{equation}%
where for $i,$ $j,\alpha _{1}\cdots \alpha _{r_{i}}=1,...,n,a^{j},a_{\alpha
	_{1}...\alpha _{r_{i}}}^{j}\in \mathcal{C}(n,\Bbbk ,\Omega )$. If we define

\begin{equation*}
(a_{\alpha _{1}...\alpha _{r_{1}}}^{i})^{p_{r_{1}}}...(a_{\alpha
	_{1}...\alpha _{r_{i}}}^{j})^{p_{r_{i}}}x^{\alpha }\times (a_{\alpha
	_{1}...\alpha _{r_{1}}}^{i})^{q_{r_{1}}}... (a_{\alpha _{1}...\alpha
	_{r_{i}}}^{j}) ^{q_{r_{i}}}x^{\mu }=(a_{\alpha _{1}...\alpha
	_{r_{1}}}^{i})^{p_{r_{1}}+q_{r_{1}}}... (a_{\alpha _{1}...\alpha
	_{r_{i}}}^{j}) ^{p_{r_{i}}+q_{r_{i}}}x^{\alpha +\mu }
\end{equation*}%

where $p_{r_{1}},\ldots ,p_{r_{i}},q_{r_{1}},\ldots ,q_{r_{i}}\in 
\mathbb{N}
^{m}$ with $m= 2(r_{i{\tiny {\tiny }}}+1)$, then the set of all monomials (\ref{monom}) denoted by $%
\mathcal{M}$ is a monoid with the identity $1$. A monomial (\ref{monom}) of
a centro-affine covariant of given type $(d_{r_{1}},...,d_{r_{i}},\delta )$
can be written

\begin{equation}
(a_{\alpha _{1}...\alpha _{r_{1}}}^{i})^{d_{r_{1}}}...(a_{\alpha
	_{1}...\alpha _{r_{i}}}^{j})^{d_{r_{i}}}x^{\delta }  \label{monomtyp}
\end{equation}%
and will be called a monomial of type $(d_{r_{1}},...,d_{r_{i}},\delta ). $%
\ The set of  the monomials (\ref{monomtyp}) of type $%
(d_{r_{1}},...,d_{r_{i}},\delta )$ denoted by $\mathcal{M}%
_{(d_{r_{1}},...,d_{r_{i}},\delta )}$ is finite and can be  written

\begin{equation*}
\mathcal{M}_{(d_{r_{1}},...,d_{r_{i}},\delta )}=\left \{ m_{1},\cdot \cdot
\cdot ,m_{l},m_{1}\prec m_{2}...\prec m_{l}\right \} 
\end{equation*}%
where $l$ is the cardinality of $\mathcal{M}_{(d_{r_{1}},...,d_{r_{i}},%
	\delta )}$ and $\prec $ is some monomial order on $\mathcal{M}$. Hence an
element $f$ of generating the family $\mathcal{F}_{(d_{r_{1}},...,d_{r_{i}},%
	\delta )}$ of the given type $(d_{r_{1}},...,d_{r_{i}},\delta )$ can be
decomposed in $\mathcal{M}_{(d_{r_{1}},...,d_{r_{i}},\delta )}$ as follows%
\begin{equation*}
f=\alpha _{1}m_{1}+\alpha _{2}m_{2}+\cdot \cdot \cdot +\alpha
_{l}m_{l},\alpha _{1},\alpha _{2},\cdot \cdot \cdot ,\alpha _{l}\in \Bbbk 
\end{equation*}%
The vector $v=\left( \alpha _{1},\alpha _{2},\cdot \cdot \cdot ,\alpha
_{l}\right) \in \Bbbk ^{l}$ is called the vector associated with $f$. Now
let $v_{1}\cdot \cdot \cdot ,v_{s}$ be the vectors associated respectively
with $f_{1},\cdot \cdot \cdot ,f_{s}$ then $rank\left \{ f_{1},\cdot \cdot
\cdot ,f_{s}\right \} =$ $rank\left \{ v_{1}\cdot \cdot \cdot ,v_{s}\right \} $.
Hence, the family $\left \{ v_{l_{1}}\cdot \cdot \cdot ,v_{l_{t}}\right \} $
is a basis of the vectorial subspace generated by the vectors $v_{1}\cdot
\cdot \cdot ,v_{s}$ if and only if $\mathcal{B}_{(d_{r_{1}},...,d_{r_{i}},%
	\delta )}=\left \{ f_{l_{1}}\cdot \cdot \cdot ,f_{l_{t}}\right \} $. The
vectors $v_{1}\cdot \cdot \cdot ,v_{s}$ can be determined using the
algorithm 2 \cite{DahiraDaliandSuiSunCheng}. We illustrate our idea by means of examples. To
determine $\mathcal{B}_{(0,2,0,0)}$ for the planar cubic differential
systems $\mathcal{S}(2,\Bbbk ,\left \{ 0,1,2,3\right \} )$ one construct $%
\mathcal{F}_{(0,0,2,0)}$ where constructing the centro-affines covariants of
type $(0,0,2,0)$. That is, $\mathcal{F}%
_{(0,0,2,0)}=\left \{ J_{4},J_{5}\right \} $ where 
\begin{equation*}
\begin{array}{cc}
J_{4} & =a_{\alpha pr}^{\alpha }a_{\beta qs}^{\beta }\varepsilon
^{pq}\varepsilon ^{rs} \\ 
J_{5} & =a_{\beta pr}^{\alpha }a_{\alpha qs}^{\beta }\varepsilon
^{pq}\varepsilon ^{rs}%
\end{array}%
,\alpha ,\beta ,p,q=1,2
\end{equation*}%
since $J_{4},J_{5}$ are the lonely centro-affine covariants obtained from
the tensor $a_{---}^{-}a_{---}^{-}$
 with the help of successive alternations and complete contraction.
The set $\mathcal{M}_{(0,0,2,0)}$ of all monomials of type $(0,0,2,0)$ can
be determined where expanding the elements of $\mathcal{F}_{(0,2,0,0)}$:

\[
\begin{array}{cc}
J_{4}= & 2a_{111}^{1}a_{122}^{1}+2\ensuremath{a_{111}^{1}a_{222}^{2}}-2\ensuremath{(a_{112}^{1})^{2}-4\ensuremath{a_{112}^{1}a_{122}^{2}+2a_{122}^{1}a_{112}^{2}}}+2\ensuremath{a_{112}^{2}a_{222}^{2}}-2\ensuremath{(a_{122}^{2})^{2}}\\
J_{5}= & 2a_{111}^{1}a_{122}^{1}-2\ensuremath{(a_{112}^{1})^{2}}+2\ensuremath{a_{112}^{1}a_{122}^{2}}-4\ensuremath{a_{122}^{1}a_{112}^{2}+2a_{222}^{1}a_{111}^{2}+2\ensuremath{a_{112}^{2}a_{222}^{2}}-2\ensuremath{(a_{122}^{2})^{2}}}
\end{array}\text{}
\]
then 
\[
\mathcal{M}_{(0,0,2,0)}=\left\{ m_{1},m_{2},m_{3},m_{4},m,m_{6},m_{7},m_{8},\;m_{1}\prec m_{2}\prec m_{3}\prec m_{4}\prec m\prec m_{6}\prec m_{7}\prec m_{8}\right\} 
\]
where the monomials 
$m_{1}=a_{111}^{1}a_{122}^{1}$, $m_{2}=a_{111}^{1}a_{222}^{2}$, $ m_{3}=\ensuremath{(a_{112}^{1})^{2}}$, $m_{4}=\ensuremath{a_{112}^{1}a_{122}^{2}}$, $m_{5}=\ensuremath{a_{122}^{1}a_{112}^{2}}$, $m_{6}=\ensuremath{a_{222}^{1}a_{111}^{2}}$, $m_{7}=\ensuremath{a_{112}^{2}a_{222}^{2}}$, $m_{8}=\ensuremath{(a_{122}^{2})^{2}}$

and

\[
\begin{array}{cc}
J_{4}= & 2m_{1}+2m_{2}-2m_{3}-4m_{4}+2m_{5}+0m_{6}+2m_{7}-2m_{8}\\
J_{5}= & 2m_{1}+0m_{2}-2m_{2}+2m_{4}-4m_{5}+2m_{6}+2m_{7}-2m_{8}
\end{array}
\]
Hence,

\[
\begin{array}{cc}
v_{1}= & (2,2,-2,-4,2,0,2,-2)\\
v_{2}= & (2,0,-2,2,-4,2,2,-2)
\end{array}
\]
Then obtain $\mathcal{B}_{(0,0,2,0)}=\left \{ J_{4},J_{5}\right \} $ since
the vectors $v_{1}$ and $v_{2}$ are linearly independent.

We can determine in the same manner for $\mathcal{B}_{(0,0,4,0)}$ for $%
\mathcal{S}(2,%
\mathbb{R}
,\left \{ 0,1,2,3\right \} ).$ We find the generators of centro-affine
covariants $\mathcal{F}_{(0,0,4,0)}=\left \{
(J_{4})^{2},J_{4}J_{5},(J_{5})^{2},J_{19}\right \} $ where 
\begin{equation*}
\begin{array}{cc}
(J_{4})^{2} & =a_{\alpha pr}^{\alpha }a_{\beta qs}^{\beta }a_{\gamma
	km}^{\gamma }a_{\delta ln}^{\delta }\varepsilon ^{pq}\varepsilon
^{rs}\varepsilon ^{kl}\varepsilon ^{mn} \\ 
J_{4}J_{5} & =a_{\alpha pr}^{\alpha }a_{\beta qs}^{\beta }a_{\delta
	km}^{\gamma }a_{\gamma ln}^{\delta }\varepsilon ^{pq}\varepsilon
^{rs}\varepsilon ^{kl}\varepsilon ^{mn} \\ 
(J_{5})^{2} & =a_{\beta pr}^{\alpha }a_{\alpha qs}^{\beta }a_{\delta
	km}^{\gamma }a_{\gamma ln}^{\delta }\varepsilon ^{pq}\varepsilon
^{rs}\varepsilon ^{kl}\varepsilon ^{mn} \\ 
J_{19} & =a_{\beta pr}^{\alpha }a_{\delta qs}^{\beta }a_{\alpha km}^{\gamma
}a_{\gamma ln}^{\delta }\varepsilon ^{pq}\varepsilon ^{rs}\varepsilon
^{kl}\varepsilon ^{mn}%
\end{array}%
,
\end{equation*}%
expand $(J_{4})^{2},J_{4}J_{5},(J_{5})^{2}$ and $J_{19}$ then obtain all the
monomials of type $(0,0,4,0)$: $(a^{1}_{111})^{2}(a^{1}_{122})^{2}$, 
$(a^{1}_{111})^{2}a^{1}_{122}a^{2}_{222}$, 
$(a^{1}_{111})^{2}a^{1}_{222}a^{2}_{122}$, 
$(a^{1}_{111})^{2}(a^{2}_{222})^{2}$, 
$a^{1}_{111}(a^{1}_{112})^{2}a^{1}_{122}$, 
$a^{1}_{111}(a^{1}_{112})^{2}a^{2}_{222}$, 
$a^{1}_{111}a^{1}_{112}a^{1}_{122}a^{2}_{122}$,  
$a^{1}_{111}a^{1}_{112}a^{1}_{222}a^{2}_{112}$,  
$a^{1}_{111}a^{1}_{112}a^{2}_{122}a^{2}_{222}$, 
$a^{1}_{111}(a^{1}_{122})^{2}a^{2}_{112}$,  
$a^{1}_{111}a^{1}_{122}a^{1}_{222}a^{2}_{111}$,  
$a^{1}_{111}a^{1}_{122}a^{2}_{112}a^{2}_{222}$,
 $a^{1}_{111}a^{1}_{122}(a^{2}_{122})^{2}$,
$a^{1}_{111}a^{1}_{222}a^{2}_{111}a^{2}_{222}$,  
$a^{1}_{111}a^{1}_{222}a^{2}_{112}a^{2}_{122}$,  
$a^{1}_{111}a^{2}_{112}(a^{2}_{222})^{2}$,   
$a^{1}_{111}(a^{2}_{122})^{2}a^{2}_{222}$,  
$(a^{1}_{112})^{4}$, $(a^{1}_{112})^{3}a^{2}_{122}$,   
$(a^{1}_{112})^{2}a^{1}_{122}a^{2}_{112}$,  
$(a^{1}_{112})^{2}a^{1}_{222}a^{2}_{111}$,  
$(a^{1}_{112})^{2}a^{2}_{112}a^{2}_{222}$,  
$(a^{1}_{112})^{2}(a^{2}_{122})^{2}$,  
$a^{1}_{112}(a^{1}_{122})^{2}a^{2}_{111}$, 
$a^{1}_{112}a^{1}_{122}a^{2}_{111}a^{2}_{222}$, 
$a^{1}_{112}a^{1}_{122}a^{2}_{112}a^{2}_{122}$,  
$a^{1}_{112}a^{1}_{222}a^{2}_{111}a^{2}_{122}$,  
$a^{1}_{112}a^{1}_{222}(a^{2}_{112})^{2}$, 
$a^{1}_{112}a^{2}_{111}(a^{2}_{222})^{2}$, 
$a^{1}_{112}a^{2}_{112}a^{2}_{122}a^{2}_{222}$,  
$a^{1}_{112}(a^{2}_{122})^{3}$,  
$(a^{1}_{122})^{2}a^{2}_{111}a^{2}_{122}$, 
$(a^{1}_{122})^{2}(a^{2}_{112})^{2}$, 
$a^{1}_{122}a^{1}_{222}a^{2}_{111}a^{2}_{112}$, 
$a^{1}_{122}a^{2}_{111}a^{2}_{122}a^{2}_{222}$, 
$a^{1}_{122}(a^{2}_{112})^{2}a^{2}_{222}$,  
$a^{1}_{122}a^{2}_{112}(a^{2}_{122})^{2}$, 
$(a^{1}_{222})^{2}(a^{2}_{111})^{2}$, 
$a^{1}_{222}a^{2}_{111}a^{2}_{112}a^{2}_{222}$, 
$a^{1}_{222}a^{2}_{111}(a^{2}_{122})^{2}$, 
$a^{1}_{222}(a^{2}_{112})^{2}a^{2}_{122}$, 
$(a^{2}_{112})^{2}(a^{2}_{222})^{2}$, 
$a^{2}_{112}(a^{2}_{122})^{2}a^{2}_{222}$, 
$(a^{2}_{122})^{4}$, then obtain $\mathcal{M}%
_{(0,0,4,0)}$ the set of all monomials of the type $(0,0,4,0)$ given in some
total ordering $\prec .$

\begin{equation*}
\mathcal{M}_{(0,4,0)}=\left \{ m_{1},\ldots ,m_{44},\; m_{1}\prec ...\prec m_{44} \right \}
\end{equation*}%
calculate the vectors associated with $(J_{4})^{2},J_{4}J_{5},(J_{5})^{2}$
and $J_{19}:$

\[
\begin{array}{ll}
w_{1}=& (4,8,0,4,-8,-8,-16,0,-16,8,0,16,-8,0,0,8,-8,4,16,-8,0,-8,24,0,0,-16,0,0,0,-16,16,...,4,-8,4)\\
w_{2}=& (4,4,0,0,-8,-4,-4,0,4,-4,4,0,-8,4,0,4,-4,4,4,4,-4,-8,0,0,0,20,-8,0,0,-4,4,...,4,-8,4)\\
w_{3}=& (4,0,0,0,-8,0,8,0,0,-16,8,8,-8,0,0,0,0,4,-8,16,-8,-8,12,0,0,-16,8,0,0,8,-8,...,4,-8,4)\\
w_{4}=& (4,0,2,0,-8,0,2,-4,2,-8,6,0,-4,2,0,0,0,4,-4,12,-4,-4,2,-2,0,2,4,-4,2,2,-4,...,4,-8,4)
\end{array}
\]

Then $\mathcal{B}_{(0,0,4,0)}=\left \{
(J_{4})^{2},J_{4}J_{5},(J_{5})^{2},J_{19}\right \} $ since $w_{1},w_{2},w_{3}$
and $w_{4}$ are linearly independent. For $d\geq 1$, let $\mathcal{B}_{d}$
be the set of the centro-affine covariants of $\mathcal{B}(n,\Bbbk ,\Omega )$
of degree less than or equal to $d$ and $\mathcal{I}_{d}=\left \langle 
\mathcal{B}_{d}\right \rangle $ the ideal generated by $\mathcal{B}_{d}$. In
view of the Hilbert basis theorem the sequence of the ideals $\left( 
\mathcal{I}_{d}\right) _{d\geq 1}$ is increasing and stationary. Hence, $%
\mathcal{B}(n,\Bbbk ,\Omega )$ is finite and therefore $\bigoplus%
\limits_{d_{0}+d_{1}+d_{3}+\delta \geq 1}$\ $\mathcal{B}%
_{(d_{0},d_{1},d_{3},\delta )}$ is finite. 

There exist an upper bound of
degrees of these generators which we denote by $D$. This bound has been
calculated by V. Popov \cite{Popov}  and recently improved by H. Derksen \cite{HDerksenHKraft} but still too large. We find $D$ when all the centro-affine covariants of degree $d\geq D$ are reducible. Now we are able to determine a minimal
system $\mathcal{B}(n,\Bbbk ,\Omega ).$ 

\begin{theorem}
Let $D$ be the upper bound of degrees of the generators of the centro-affine covariants differential systems $\mathcal{S}(n,\Bbbk ,\Omega ).$
	
	if $\bigoplus \limits_{1\leq d_{0}+d_{1}+d_{3}+\delta \leq D}$\ $\mathcal{B}%
	_{(d_{0},d_{1},d_{3},\delta )}=\{J_{1}^{\alpha _{1}^{1}}...J_{\tau }^{\alpha
		_{\tau }^{1}},...,J_{1}^{\alpha _{1}^{s}}...J_{\tau }^{\alpha _{\tau }^{s}}\}
	$, $i=1,...,s$ and $j=1,...,\tau $ $\alpha _{j}^{i}\in 
	\mathbb{N}
	$, then 
	$$  \mathcal{B}(n,\Bbbk ,\Omega ) 
	= \{J_{1},...,J_{\tau }\}  .$$
\end{theorem}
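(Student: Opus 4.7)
The plan is to split the argument into two parts: (i) the family $\{J_1,\ldots,J_\tau\}$ spans the entire algebra of centro-affine covariants, and (ii) no $J_k$ is expressible as a polynomial in the other $J_j$'s. Both parts rely on the multigraded decomposition established in Section~2 together with Hilbert's finiteness theorem and the Popov--Derksen bound $D$.

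For the generating property, I first use the multigraded decomposition
\[
\Bbbk[\mathcal{C}(n,\Bbbk,\Omega)\times\Bbbk^n]^{GL(n,\Bbbk)} \;=\; \bigoplus_{d_0,d_1,d_3,\delta\in\mathbb{N}} \mathcal{A}_{(d_0,d_1,d_3,\delta)}
\]
to reduce to showing that every homogeneous covariant $C\in\mathcal{A}_{(d_0,d_1,d_3,\delta)}$ lies in the subalgebra $\Bbbk[J_1,\ldots,J_\tau]$. If $d_0+d_1+d_3+\delta\le D$, then by hypothesis $C$ is a linear combination of basis elements of $\mathcal{B}_{(d_0,d_1,d_3,\delta)}$, each of which is by assumption a monomial $J_1^{\alpha_1^i}\cdots J_\tau^{\alpha_\tau^i}$; hence $C\in\Bbbk[J_1,\ldots,J_\tau]$ directly. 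If $d_0+d_1+d_3+\delta>D$, then by the defining property of $D$ (the Popov/Derksen degree bound cited earlier), $C$ is reducible, i.e.\ expressible as a polynomial in covariants of strictly lower degree. An induction on degree reduces $C$ to a polynomial in covariants of degree $\le D$, which by the previous step are polynomials in the $J_i$, establishing the spanning property.

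For minimality, suppose for contradiction that some $J_k$ is generated by the remaining $J_j$'s, so that $J_k = P(J_1,\ldots,\widehat{J_k},\ldots,J_\tau)$ for some polynomial $P$ without constant term. Let $(d_0^\ast,d_1^\ast,d_3^\ast,\delta^\ast)$ be the type of $J_k$. Since the algebra is multigraded, I may project $P$ onto its component of type $(d_0^\ast,d_1^\ast,d_3^\ast,\delta^\ast)$ without changing the identity. This component is a linear combination of monomials $J_1^{\beta_1}\cdots J_\tau^{\beta_\tau}$ of type $(d_0^\ast,d_1^\ast,d_3^\ast,\delta^\ast)$ in which the exponent $\beta_k$ vanishes. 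By the hypothesis of the theorem, every such monomial (including $J_k$ itself, taken with exponent vector $(0,\ldots,1,\ldots,0)$) belongs to the basis $\mathcal{B}_{(d_0^\ast,d_1^\ast,d_3^\ast,\delta^\ast)}$. Thus I obtain a nontrivial linear relation inside this basis expressing $J_k$ in terms of other basis elements, contradicting linear independence.

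The main obstacle is the induction step for degrees exceeding $D$: one must verify that the reduction promised by the upper bound $D$ keeps the expression strictly within lower-degree covariants at each stage, so that the recursion terminates and lands inside the span of the basis elements of degree $\le D$. This is guaranteed by the very definition of $D$ as ``the upper bound for which all centro-affine covariants of degree $d\ge D$ are reducible,'' combined with the fact that the grading by total degree $d=\delta+\sum_{r_i\in\Omega} d_{r_i}$ is compatible with the multiplication in $\Bbbk[\mathcal{C}(n,\Bbbk,\Omega)\times\Bbbk^n]^{GL(n,\Bbbk)}$.
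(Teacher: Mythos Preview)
Your argument follows essentially the same route as the paper: use the multigraded decomposition together with the degree bound $D$ to show that the $J_i$ generate, and invoke the linear independence of the chosen basis monomials for minimality. Your treatment of the generating step is in fact more explicit than the paper's, since you spell out the induction on total degree for covariants of degree exceeding $D$, whereas the paper simply asserts that the algebra equals $\langle\{J_{1}^{\alpha_{1}^{1}}\cdots J_{\tau}^{\alpha_{\tau}^{1}},\ldots\}\rangle$.

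One small imprecision in your minimality paragraph: you write that ``every such monomial \ldots\ belongs to the basis $\mathcal{B}_{(d_0^\ast,d_1^\ast,d_3^\ast,\delta^\ast)}$,'' but the hypothesis only says that the basis \emph{consists of} monomials in the $J_j$, not that \emph{every} monomial in the $J_j$ of that type is a basis vector (there may be syzygies among such monomials). The fix is easy: expand each monomial of $P$ in the basis $\mathcal{B}_{(d_0^\ast,\ldots,\delta^\ast)}$ and observe that $J_k$ itself, being by construction one of the ``new'' irreducible basis elements at its own type, cannot be cancelled by products of strictly lower-degree factors, yielding the desired contradiction with linear independence. The paper's own proof is no more careful on this point; it simply records that the listed monomials are linearly independent and lie in $\langle J_1,\ldots,J_\tau\rangle$, leaving the deduction of minimality implicit.
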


\begin{proof}
	We have,
	\begin{equation*}
	\Bbbk \lbrack \mathcal{C}(n,\Bbbk ,\Omega )\times \Bbbk ^{n}]^{GL(n,\Bbbk
		)}=\bigoplus {}_{d\in \mathbf{N}}\Bbbk \lbrack \mathcal{C}(n,\Bbbk ,\Omega
	)\times \Bbbk ^{n}]_{d}^{GL(n,\Bbbk )}
	\end{equation*}%
	and 
	\begin{equation*}
	\Bbbk \lbrack \mathcal{C}(n,\Bbbk ,\Omega )\times \Bbbk
	^{n}]_{d}^{GL(n,\Bbbk )}=\bigoplus {}_{\substack{ d_{r_{1}},...,d_{r_{i}},%
			\delta \in 
			\mathbb{N}
			\\ \delta +\sum_{r_{i}\in \Omega }d_{r_{i}}=d}}\mathcal{A}%
	_{(d_{r_{1}},...,d_{r_{i}},\delta )},
	\end{equation*}%
	then%
	\begin{equation*}
	\Bbbk \lbrack \mathcal{C}(n,\Bbbk ,\Omega )\times \Bbbk ^{n}]^{GL(n,\Bbbk
		)}=\left \langle \{J_{1}^{\alpha _{1}^{1}}...J_{\tau }^{\alpha _{\tau
		}^{1}},...,J_{1}^{\alpha _{1}^{s}}...J_{\tau }^{\alpha _{\tau
		}^{s}}\} \right \rangle 
	\end{equation*}
where for $i=1,...,\tau ,d%
{{}^\circ}%
J_{\tau }^{\alpha _{\tau }^{1}}\leq D.$

 Since $J_{1}^{\alpha _{1}^{1}}...J_{\tau }^{\alpha _{\tau
	}^{1}},...,J_{1}^{\alpha _{1}^{s}}...J_{\tau }^{\alpha _{\tau }^{s}}$ are
linearly independents and $J_{1}^{\alpha _{1}^{1}}...J_{\tau }^{\alpha
	_{\tau }^{1}},...,J_{1}^{\alpha _{1}^{s}}...J_{\tau }^{\alpha _{\tau
	}^{s}}\in $ $\left \langle \{J_{1},...,J_{\tau }\} \right \rangle $, thus%
 
\begin{equation*}
\mathcal{B}(n,\Bbbk ,\Omega )=\{J_{1},...,J_{\tau }\}
\end{equation*}

The theorem is proved.
\end{proof}

For instance, consider the planar differential systems $\mathcal{B}(2,%
\mathbb{R}
,\left \{ 0,1\right \} )$, that is,

\begin{equation*}
\frac{dx^{j}}{dt}=a^{j}+a_{\alpha }^{j}x^{\alpha }\ j,\alpha =1,2
\end{equation*}%
Degree by degree we find the generating families $\mathcal{F}%
_{(d_{0},d_{1},d_{3},\delta )}$  of types $(d_{0},d_{1},d_{3},\delta )$ 
where $d_{0},d_{1},d_{3},\delta \in 
\mathbb{N}$
and $d_{0}+d_{1}+d_{3}+\delta \leq d$ of given degree $d$ and with the
help of the algorithm 2 deduce  corresponding bases $\mathcal{B}%
_{(d_{0},d_{1},d_{3},\delta )}$. Therefore

\begin{equation*}
\bigoplus \limits_{d_{0}+d_{1}+\delta \geq 1}\  \mathcal{B}%
_{(d_{0},d_{1},d_{3},\delta
	)}=\{I_{1},I_{1}^{2},I_{2},K_{21},I_{1}^{3},I_{1}I_{2},I_{18},K_{21}I_{1},K_{22},K_{2}\}
\end{equation*}%
since all the centro-affine covariants of greater degree are
reducible, where 
\begin{equation*}
\begin{array}{ll}
I_{1}=a_{\alpha }^{\alpha }; & K_{21}=a^{p }x^{q };\\
I_{2}=a_{\alpha }^{\beta }a_{\beta }^{\alpha };  & K_{2}=a_{\alpha }^{p}x^{\alpha }x^{q}\varepsilon _{pq};\\
I_{18}=a_{\alpha}^{p}a^{\alpha }a^{q}\varepsilon _{pq}&K_{22}=a_{\alpha }^{p}a^{\alpha }x^{q}\varepsilon

\end{array}%
\end{equation*}%
then one get
$$\mathcal{B}(2,%
\mathbb{R}
,\left \{ 0,1\right \} )=\{I_{1},I_{2},I_{18},K_{2},K_{21},K_{22}\}.$$

\section{Minimal system of generators of centro-affine invariants and
	covariants for $\mathcal{S}(2,\mathbb{R},\left \{ 0,1,3\right \} )$}

In this section we describe the algebra of centro-affine covariants for
differential systems $\mathcal{S}(2,%
\mathbb{R}
,\Omega _{0})$ where $\Omega _{0}=\left \{ 0,1,3\right \} $

\begin{equation}
\frac{dx^{j}}{dt}=a^{j}+a_{\alpha }^{i}x^{\alpha }+a_{\alpha \beta \gamma
}^{j}x^{\alpha }x^{\beta }x^{\gamma },\ j,\alpha ,\beta ,\gamma =1,2\label{syscubic}
\end{equation}%
A monomial (\ref{monom}) associated with $\mathcal{S}(2,%
\mathbb{R}
,\left \{ 0,1,3\right \} )$ is a finite product of the form

\begin{equation}
(a^{j})^{p_{0}}(a_{\alpha }^{j})^{p_{1}}(a_{\alpha \beta \gamma
}^{j})^{p_{3}}(x)^{\alpha }.  \label{cubicmonom}
\end{equation}%
where $p_{0}\in 
\mathbb{N}
^{2},p_{1}\in 
\mathbb{N}
^{4},p_{3}\in 
\mathbb{N}
^{8},\alpha \in 
\mathbb{N}
^{2}$. Let us\textit{\ }order the coefficients of $\mathcal{C}(2,%
\mathbb{R}
,\left \{ 0,1,3\right \} )$ and the components $x^{1},x^{2}$ of the
contravariant vector\ $x$ in the following manner: 
\begin{equation}
a^{1}\prec a^{2}\prec a_{1}^{1}\prec a_{2}^{1}\prec a_{1}^{2}\prec
a_{2}^{2}\prec a_{111}^{1}\prec a_{112}^{1}\prec a_{122}^{1}\prec
a_{222}^{1}\prec a_{111}^{2}\prec a_{112}^{2}\prec a_{122}^{2}\prec
a_{222}^{2}\prec x^{1}\prec x^{2}.  \label{cubicorder}
\end{equation}
The total ordering defined by (\ref{cubicmonom}) is a total ordering and can
be extended to a total lexicographic ordering for the set $\mathcal{M}$ of
all the monoials (\ref{cubicmonom}) and in the usual manner (see e.g. [12,
pp. 373-375]) 

\begin{equation*}
\begin{array}{c}
(a^{j})^{p}(a_{\alpha }^{j})^{r}(a_{\alpha \beta \gamma }^{j})^{q}x^{r}\prec
(a^{j})^{p^{\prime }}(a_{\alpha }^{j})^{r^{\prime }}(a_{\alpha \beta \gamma
}^{j})^{q^{\prime }}x^{r^{\prime }} \\ 
\Leftrightarrow  \\ 
\text{the first nonzero component of the vector }(p-p^{\prime },r-r^{\prime
},q-q^{\prime },r-r^{\prime })\text{ is positive}%
\end{array}%
\end{equation*}%
$\mathcal{\ }$ Given type $(d_{0},d_{1},d_{3},\delta )$ the set of all the
monomials of type $(d_{0},d_{1},d_{3},\delta )$ can be written as $\mathcal{M%
}_{(d_{0},d_{1},d_{3},\delta )}=\left \{ m_{1},...,m_{l},m_{1}\prec
m_{2}...\prec m_{l}\right \} $. Hence, given type $(d_{0},d_{1},d_{3},\delta )
$ one use the algorithm 2 \cite{DahiraDaliandSuiSunCheng} to decompose the elements  $%
f_{1},...,f_{\tau }$ of $\mathcal{F}_{(d_{0},d_{1},d_{3},\delta )}$ in $%
\mathcal{M}_{(d_{0},d_{1},d_{3},\delta )}$ then obtain $v_{1},...,v_{\tau }$
the vectors associated with $f_{1},...,f_{\tau }$ respectively then deduce $%
\mathcal{B}_{(d_{0},d_{1},d_{3},\delta )},d_{0}+d_{1}+d_{3}+\delta \geq 1$
then get  a minimal system $\mathcal{B}(2,%
\mathbb{R}
,\left \{ 0,1,3\right \} )$ of generators of centro-affine invariants of
degree up to $9$  for $\mathcal{S}(2,%
\mathbb{R}
,\left \{ 0,1,3\right \} ).$

\begin{theorem}
	The family $\mathcal{S}(\Omega )=\{J_{1},\ldots ,J_{47},K_{1},\ldots
	,K_{75}\}$ form a minimal system of generators of centro-affine invariants and covariants of $\mathcal{S}(2,\mathbb{R},\left \{ 0,1,3\right \} )$, where
	
	\begin{center}
		\begin{tabular}{lll}
			$J_{1}=a_{\alpha}^{\alpha}$ & $J_{4}=a_{\alpha pr}^{\alpha}a_{\beta qs}^{\beta}\varepsilon^{pq}\varepsilon^{rs}$ & $J_{7}=a^{\alpha}a^{\beta}a_{\gamma\alpha\beta}^{\gamma}$\tabularnewline
			$J_{2}=a_{\beta}^{\alpha}a_{\alpha}^{\beta}$ & $J_{5}=a_{\beta pr}^{\alpha}a_{\alpha qs}^{\beta}\varepsilon^{pq}\varepsilon^{rs}$ & $J_{8}=a_{p}^{\alpha}a_{\gamma}^{\beta}a_{\beta\alpha q}^{\gamma}\varepsilon^{pq}$\tabularnewline
			$J_{3}=a_{p}^{\alpha}a_{\beta\alpha q}^{\beta}\varepsilon^{pq}$ & $J_{6}=a^{\alpha}a^{q}a_{\alpha}^{p}\varepsilon_{pq}$ & $J_{9}=a_{\gamma}^{\alpha}a_{\alpha pr}^{\beta}a_{\beta qs}^{\gamma}\varepsilon^{pq}\varepsilon^{rs}$\tabularnewline
		\end{tabular}
		\par\end{center}
	
	\begin{center}
		\begin{tabular}{cc}
			$J_{10}=a_{\gamma pr}^{\alpha}a_{\alpha qk}^{\beta}a_{\beta sl}^{\gamma}\varepsilon^{pq}\varepsilon^{rs}\varepsilon^{kl}$ & $J_{11}=a_{\gamma\beta p}^{\alpha}a_{\alpha rk}^{\beta}a_{qsl}^{\gamma}\varepsilon^{pq}\varepsilon^{rs}\varepsilon^{kl}$\tabularnewline
		\end{tabular}
		\par\end{center}
	
	\begin{center}
		\begin{tabular}{ll}
			$J_{12}=a^{\alpha}a^{\beta}a_{\delta}^{\gamma}a_{\beta\gamma\alpha}^{\delta}$ & $J_{20}=a^{\alpha}a^{\beta}a^{\gamma}a^{p}a_{\alpha\beta\gamma}^{q}\varepsilon_{pq}$\tabularnewline
			$J_{13}=a^{\alpha}a^{\beta}a_{\beta}^{\gamma}a_{\delta\gamma\alpha}^{\delta}$ & $J_{21}=a^{\alpha}a^{\beta}a_{\mu}^{\gamma}a_{\beta}^{\delta}a_{\gamma\delta\alpha}^{\mu}$\tabularnewline
			$J_{14}=a^{\alpha}a^{\beta}a_{\alpha\beta p}^{\gamma}a_{\delta\gamma q}^{\delta}\varepsilon^{pq}$ & $J_{22}=a^{\alpha}a^{\beta}a_{\mu}^{\gamma}a_{\delta\gamma p}^{\delta}a_{\beta\alpha q}^{\mu}\varepsilon^{pq}$\tabularnewline
			$J_{15}=a_{p}^{\alpha}a_{r}^{\beta}a_{\gamma\delta q}^{\gamma}a_{\alpha\beta s}^{\delta}\varepsilon^{pq}\varepsilon^{rs}$ & $J_{23}=a^{\alpha}a^{\beta}a_{\delta}^{\gamma}a_{\mu\gamma p}^{\delta}a_{\beta\alpha q}^{\mu}\varepsilon^{pq}$\tabularnewline
			$J_{16}=a_{p}^{\alpha}a_{r}^{\beta}a_{\beta\delta q}^{\gamma}a_{\alpha\gamma s}^{\delta}\varepsilon^{pq}\varepsilon^{rs}$ & $J_{24}=a^{\alpha}a^{\beta}a_{\mu}^{\gamma}a_{\delta\beta p}^{\delta}a_{\gamma\alpha q}^{\mu}\varepsilon^{pq}$\tabularnewline
			$J_{17}=a_{\gamma}^{\alpha}a_{\delta pk}^{\beta}a_{\beta qr}^{\gamma}a_{\alpha sl}^{\delta}\varepsilon^{pq}\varepsilon^{rs}\varepsilon^{kl}$ & $J_{25}=a^{\alpha}a^{\beta}a_{\gamma\alpha p}^{\gamma}a_{\beta rq}^{\delta}a_{\mu\delta s}^{\mu}\varepsilon^{pq}\varepsilon^{rs}$\tabularnewline
			$J_{18}=a_{\delta}^{\alpha}a_{\gamma\beta p}^{\beta}a_{\alpha rk}^{\gamma}a_{qsl}^{\delta}\varepsilon^{pq}\varepsilon^{rs}\varepsilon^{kl}$ & $J_{26}=a^{\alpha}a^{\beta}a_{\mu\delta\gamma}^{\gamma}a_{\beta pr}^{\delta}a_{\alpha qs}^{\mu}\varepsilon^{pq}\varepsilon^{rs}$\tabularnewline
			$J_{19}=a_{\beta pr}^{\alpha}a_{\delta qs}^{\beta}a_{\alpha km}^{\gamma}a_{\gamma ln}^{\delta}\varepsilon^{pq}\varepsilon^{rs}\varepsilon^{kl}\varepsilon^{mn}$ & $J_{27}=a_{p}^{\alpha}a_{r}^{\beta}a_{\mu\delta k}^{\gamma}a_{\beta\alpha l}^{\delta}a_{\gamma qs}^{\mu}\varepsilon^{pq}\varepsilon^{rs}\varepsilon^{kl}$\tabularnewline
		\end{tabular}
		\par\end{center}
	
	\begin{center}
		\begin{tabular}{l}
			$J_{28}=a_{p}^{\alpha}a_{\mu\delta q}^{\beta}a_{\gamma rm}^{\gamma}a_{\beta sk}^{\delta}a_{\alpha ln}^{\mu}\varepsilon^{pq}\varepsilon^{rs}\varepsilon^{kl}\varepsilon^{mn}$\tabularnewline
		\end{tabular}
		\par\end{center}
	
	\begin{center}
		\begin{tabular}{ll}
			$J_{29}=a^{\alpha}a^{\beta}a^{\gamma}a^{p}a_{\delta}^{q}a_{\alpha\beta\gamma}^{\delta}\varepsilon_{pq}$ & $J_{33}=a^{\alpha}a^{\beta}a_{\nu}^{\gamma}a_{\mu\delta\gamma}^{\delta}a_{\beta pr}^{\mu}a_{\alpha qs}^{\nu}\varepsilon^{pq}\varepsilon^{rs}$\tabularnewline
			$J_{30}=a^{\alpha}a^{\beta}a^{\gamma}a^{\delta}a_{\mu\delta\nu}^{\mu}a_{\alpha\beta\gamma}^{\nu}$ & $J_{34}=a^{\alpha}a^{\beta}a_{\delta}^{\gamma}a_{\nu\mu\gamma}^{\delta}a_{\beta pr}^{\mu}a_{\alpha qs}^{\nu}\varepsilon^{pq}\varepsilon^{rs}$\tabularnewline
			$J_{31}=a^{\alpha}a^{\beta}a^{\gamma}a^{\delta}a_{\gamma\delta\nu}^{\mu}a_{\alpha\beta\mu}^{\nu}$ & $J_{35}=a^{\alpha}a^{\beta}a_{\mu\nu p}^{\gamma}a_{\gamma\delta q}^{\delta}a_{\beta rk}^{\mu}a_{\alpha sl}^{\nu}\varepsilon^{pq}\varepsilon^{rs}\varepsilon^{kl}$\tabularnewline
			$J_{32}=a^{\alpha}a^{\beta}a_{\nu}^{\gamma}a_{\mu}^{\delta}a_{\delta\gamma p}^{\mu}a_{\beta\alpha q}^{\nu}\varepsilon^{pq}$ & $J_{36}=a_{\nu}^{\alpha}a_{r}^{\beta}a_{p}^{\gamma}a_{\beta\gamma\mu}^{\delta}a_{\delta qk}^{\mu}a_{\alpha sl}^{\nu}\varepsilon^{pq}\varepsilon^{rs}\varepsilon^{kl}$\tabularnewline
		\end{tabular}
		\par\end{center}
	
	\begin{center}
		\begin{tabular}{l}
			$J_{37}=a_{\nu}^{\alpha}a_{\delta}^{\beta}a_{\mu pr}^{\gamma}a_{\gamma qs}^{\delta}a_{\beta km}^{\mu}a_{\alpha ln}^{\nu}\varepsilon^{pq}\varepsilon^{rs}\varepsilon^{kl}\varepsilon^{mn}$\tabularnewline
			$J_{38}=a_{p}^{\alpha}a_{\delta\nu q}^{\beta}a_{\mu rk}^{\gamma}a_{\gamma sl}^{\delta}a_{\beta mw}^{\mu}a_{\alpha nv}^{\nu}\varepsilon^{pq}\varepsilon^{rs}\varepsilon^{kl}\varepsilon^{mn}\varepsilon^{wv}$\tabularnewline
			$J_{39}=a_{\nu pk}^{\alpha}a_{\mu qr}^{\beta}a_{\delta sl}^{\gamma}a_{\gamma um}^{\delta}a_{\beta ng}^{\mu}a_{\alpha vh}^{\nu}\varepsilon^{pq}\varepsilon^{rs}\varepsilon^{kl}\varepsilon^{mn}\varepsilon^{uv}\varepsilon^{gh}$\tabularnewline
		\end{tabular}
		\par\end{center}
	
	\begin{center}
		\begin{tabular}{ll}
			$J_{40}=a^{\alpha}a^{\beta}a^{\gamma}a^{\delta}a_{\nu}^{\mu}a_{\delta\mu\eta}^{\nu}a_{\beta\alpha\gamma}^{\eta}$ & $J_{44}=a^{\alpha}a^{\beta}a_{\nu\eta p}^{\gamma}a_{\delta\mu r}^{\delta}a_{\gamma qk}^{\mu}a_{\beta lm}^{\nu}a_{\alpha sn}^{\eta}\varepsilon^{pq}\varepsilon^{rs}\varepsilon^{kl}\varepsilon^{mn}$\tabularnewline
			$J_{41}=a^{\alpha}a^{\beta}a^{\gamma}a^{\delta}a_{\delta\nu\eta}^{\mu}a_{\gamma\mu p}^{\nu}a_{\alpha\beta q}^{\eta}\varepsilon^{pq}$ & $J_{45}=a^{\alpha}a^{\beta}a^{\gamma}a^{\delta}a_{\xi}^{\mu}a_{\delta\mu\eta}^{\nu}a_{\gamma\nu p}^{\eta}a_{\beta\alpha q}^{\xi}\varepsilon^{pq}$\tabularnewline
			$J_{42}=a^{\alpha}a^{\beta}a_{\eta}^{\gamma}a_{\mu}^{\delta}a_{\gamma\delta\nu}^{\mu}a_{\beta pr}^{\nu}a_{\alpha qs}^{\eta}\varepsilon^{pq}\varepsilon^{rs}$ & $J_{46}=a^{\alpha}a^{\beta}a^{\gamma}a^{\delta}a_{\xi\nu\eta}^{\mu}a_{\delta\gamma\mu}^{\nu}a_{\beta pr}^{\eta}a_{\alpha qs}^{\xi}\varepsilon^{pq}\varepsilon^{rs}$\tabularnewline
			$J_{43}=a^{\alpha}a^{\beta}a_{\nu}^{\gamma}a_{\gamma\mu\eta}^{\delta}a_{\delta kp}^{\mu}a_{\beta qr}^{\nu}a_{\alpha sl}^{\eta}\varepsilon^{pq}\varepsilon^{rs}\varepsilon^{kl}$ & $J_{47}=a^{\alpha}a^{\beta}a^{\gamma}a^{\delta}a^{\mu}a^{\nu}a_{\xi\varphi\nu}^{\eta}a_{\delta\mu\eta}^{\xi}a_{\beta\gamma\alpha}^{\varphi}$\tabularnewline
		\end{tabular}
		\par\end{center}
	
	\begin{center}
		\begin{tabular}{lll}
			$K_{1}=a^{p}x^{q}\varepsilon_{pq}$ & $K_{4}=a_{\alpha}^{p}x^{\alpha}x^{q}\varepsilon_{pq}$ & $K_{7}=a^{\alpha}a_{\alpha}^{\beta}a_{\gamma\beta\delta}^{\gamma}x^{\delta}$\tabularnewline
			$K_{2}=a^{\alpha}a_{\alpha}^{p}x^{q}\varepsilon_{pq}$ & $K_{5}=a_{\alpha\beta\gamma}^{\alpha}x^{\beta}x^{\gamma}$ & $K_{8}=a^{\alpha}a_{\delta\alpha p}^{\beta}a_{\gamma\beta q}^{\gamma}x^{\delta}\varepsilon^{pq}$ \tabularnewline
			$K_{3}=a^{\alpha}a_{\beta\alpha\gamma}^{\beta}x^{\gamma}$ & $K_{6}=a^{\alpha}a_{\gamma}^{\beta}a_{\alpha\beta\delta}^{\gamma}x^{\delta}$  & $K_{9}=a_{\beta}^{\alpha}a_{\alpha\gamma\delta}^{\beta}x^{\gamma}x^{\delta}$\tabularnewline
		\end{tabular}
		\par\end{center}
	
	\begin{center}
		\begin{tabular}{ll}
			$K_{10}=a_{\gamma}^{\alpha}a_{\beta\alpha\delta}^{\beta}x^{\gamma}x^{\delta}$ & $K_{15}=a^{\alpha}a_{\delta}^{\beta}a_{\gamma\beta p}^{\gamma}a_{\alpha\mu q}^{\delta}x^{\mu}\varepsilon^{pq}$\tabularnewline
			$K_{11}=a_{\gamma\delta p}^{\alpha}a_{\beta\alpha q}^{\beta}x^{\gamma}x^{\delta}\varepsilon^{pq}$ & $K_{16}=a^{\alpha}a_{\gamma}^{\beta}a_{\delta\beta p}^{\gamma}a_{\alpha\mu q}^{\delta}x^{\mu}\varepsilon^{pq}$\tabularnewline
			$K_{12}=a^{\alpha}a^{\beta}a^{\gamma}a_{\alpha\beta\gamma}^{q}x^{p}\varepsilon_{pq}$ & $K_{17}=a^{\alpha}a_{\delta}^{\beta}a_{\gamma\alpha p}^{\gamma}a_{\beta\mu q}^{\delta}x^{\mu}\varepsilon^{pq}$\tabularnewline
			$K_{13}=a^{\alpha}a^{\beta}a_{\alpha\beta\gamma}^{q}x^{\gamma}x^{p}\varepsilon_{pq}$ & $K_{18}=a^{\alpha}a_{\beta\alpha p}^{\beta}a_{\mu rq}^{\gamma}a_{\delta\gamma s}^{\delta}x^{\mu}\varepsilon^{pq}\varepsilon^{rs}$\tabularnewline
			$K_{14}=a^{\alpha}a_{\delta}^{\beta}a_{\alpha}^{\gamma}a_{\beta\gamma\mu}^{\delta}x^{\mu}$ & $K_{19}=a^{\alpha}a_{\beta\delta\gamma}^{\beta}a_{\mu pr}^{\gamma}a_{\alpha qs}^{\delta}x^{\mu}\varepsilon^{pq}\varepsilon^{rs}$\tabularnewline
		\end{tabular}
		\par\end{center}
	
	\begin{center}
		\begin{tabular}{ll}
			$K_{20}=a^{\alpha}a_{\alpha\beta\gamma}^{q}x^{p}x^{\beta}x^{\gamma}\varepsilon_{pq}$ & $K_{27}=a_{\alpha\beta\gamma}^{q}x^{p}x^{\alpha}x^{\beta}x^{\gamma}\varepsilon_{pq}$\tabularnewline
			$K_{21}=a_{\gamma}^{\alpha}a_{\delta}^{\beta}a_{\alpha\beta\mu}^{\gamma}x^{\delta}x^{\mu}$ & $K_{28}=a^{\alpha}a^{\beta}a^{p}a_{\gamma}^{q}a_{\beta\delta\alpha}^{\gamma}x^{\delta}\varepsilon_{pq}$\tabularnewline
			$K_{22}=a_{\gamma}^{\alpha}a_{\beta\alpha p}^{\beta}a_{\delta\mu q}^{\gamma}x^{\delta}x^{\mu}\varepsilon^{pq}$  & $K_{29}=a^{\alpha}a^{\beta}a^{\gamma}a_{\delta\mu\nu}^{\delta}a_{\alpha\beta\gamma}^{\mu}x^{\nu}$\tabularnewline
			$K_{23}=a_{\beta}^{\alpha}a_{\gamma\alpha p}^{\beta}a_{\delta\mu q}^{\gamma}x^{\delta}x^{\mu}\varepsilon^{pq}$ & $K_{30}=a^{\alpha}a^{\beta}a^{\gamma}a_{\gamma\mu\nu}^{\delta}a_{\alpha\beta\delta}^{\mu}x^{\nu}$ \tabularnewline
			$K_{24}=a_{\gamma}^{\alpha}a_{\beta\mu p}^{\beta}a_{\alpha\delta q}^{\gamma}x^{\delta}x^{\mu}\varepsilon^{pq}$  & $K_{31}=a^{\alpha}a^{p}a_{\beta}^{q}a_{\delta\gamma\alpha}^{\beta}x^{\gamma}x^{\delta}\varepsilon_{pq}$\tabularnewline
			$K_{25}=a_{\alpha\delta p}^{\alpha}a_{\mu rq}^{\beta}a_{\gamma\beta s}^{\gamma}x^{\delta}x^{\mu}\varepsilon^{pq}\varepsilon^{rs}$ & $K_{32}=a^{\alpha}a^{\beta}a_{\delta\mu\nu}^{\gamma}a_{\alpha\beta\gamma}^{\delta}x^{\mu}x^{\nu}$\tabularnewline
			$K_{26}=a_{\alpha\beta\gamma}^{\alpha}a_{\mu pr}^{\beta}a_{\delta qs}^{\gamma}x^{\delta}x^{\mu}\varepsilon^{pq}\varepsilon^{rs}$ & $K_{33}=a^{\alpha}a^{\beta}a_{\gamma\delta\nu}^{\gamma}a_{\alpha\beta\mu}^{\delta}x^{\mu}x^{\nu}$\tabularnewline
		\end{tabular}
		\par\end{center}
	
	\begin{center}
		\begin{tabular}{ll}
			$K_{34}=a^{\alpha}a_{\mu}^{\beta}a_{\delta}^{\gamma}a_{\gamma\beta p}^{\delta}a_{\alpha\nu q}^{\mu}x^{\nu}\varepsilon^{pq}$ & $K_{39}=a^{\alpha}a_{\beta\alpha\gamma}^{\beta}a_{\delta\mu\nu}^{\gamma}x^{\delta}x^{\mu}x^{\nu}$\tabularnewline
			$K_{35}=a^{\alpha}a_{\mu}^{\beta}a_{\gamma\beta\delta}^{\gamma}a_{\alpha pr}^{\delta}a_{\nu qs}^{\mu}x^{\nu}\varepsilon^{pq}\varepsilon^{rs}$ & $K_{40}=a^{\alpha}a_{\delta\alpha\gamma}^{\beta}a_{\beta\mu\nu}^{\gamma}x^{\delta}x^{\mu}x^{\nu}$\tabularnewline
			$K_{36}=a^{\alpha}a_{\gamma}^{\beta}a_{\mu\delta\beta}^{\gamma}a_{\alpha pr}^{\delta}a_{\nu qs}^{\mu}x^{\nu}\varepsilon^{pq}\varepsilon^{rs}$ & $K_{41}=a_{\delta}^{\alpha}a_{\gamma}^{\beta}a_{\alpha\beta p}^{\gamma}a_{\mu\nu q}^{\delta}x^{\mu}x^{\nu}\varepsilon^{pq}$\tabularnewline
			$K_{37}=a^{p}a_{\alpha}^{q}a_{\beta\delta\gamma}^{\alpha}x^{\beta}x^{\gamma}x^{\delta}\varepsilon_{pq}$ & $K_{42}=a_{\delta}^{\alpha}a_{\alpha\beta\gamma}^{\beta}a_{\mu pr}^{\gamma}a_{\nu qs}^{\delta}x^{\mu}x^{\nu}\varepsilon^{pq}\varepsilon^{rs}$ \tabularnewline
			$K_{38}=a^{\alpha}a_{\delta\mu p}^{\beta}a_{\gamma\beta q}^{\gamma}a_{\alpha rk}^{\delta}a_{\nu sl}^{\mu}x^{\nu}\varepsilon^{pq}\varepsilon^{rs}\varepsilon^{kl}$ & $K_{43}=a_{\beta}^{\alpha}a_{\alpha\delta\gamma}^{\beta}a_{\mu pr}^{\gamma}a_{\nu qs}^{\delta}x^{\mu}x^{\nu}\varepsilon^{pq}\varepsilon^{rs}$ \tabularnewline
		\end{tabular}
		\par\end{center}
	
	\begin{center}
		\begin{tabular}{cc}
			$K_{44}=a_{\alpha}^{q}a_{\beta\gamma\delta}^{\alpha}x^{p}x^{\beta}x^{\gamma}x^{\delta}\varepsilon_{pq}$ & $K_{45}=a_{\gamma\delta p}^{\alpha}a_{\beta\alpha q}^{\beta}a_{\mu rk}^{\gamma}a_{\nu sl}^{\delta}x^{\mu}x^{\nu}\varepsilon^{pq}\varepsilon^{rs}\varepsilon^{kl}$\tabularnewline
		\end{tabular}
		\par\end{center}
	
	\begin{center}
		\begin{tabular}{ll}
			$K_{46}=a_{\alpha\beta\gamma}^{\alpha}a_{\delta\mu\nu}^{\beta}x^{\gamma}x^{\delta}x^{\mu}x^{\nu}$ & $K_{50}=a^{\alpha}a^{\beta}a_{\delta}^{\gamma}a_{\beta\gamma\mu}^{\delta}a_{\alpha\nu\eta}^{\mu}x^{\nu}x^{\eta}$\tabularnewline
			$K_{47}=a_{\beta\gamma\delta}^{\alpha}a_{\alpha\mu\nu}^{\beta}x^{\gamma}x^{\delta}x^{\mu}x^{\nu}$ & $K_{51}=a^{\alpha}a^{\beta}a_{\delta\mu\nu}^{\gamma}a_{\gamma\eta p}^{\delta}a_{\alpha\beta q}^{\mu}x^{\nu}x^{\eta}\varepsilon^{pq}$\tabularnewline
			$K_{48}=a^{\alpha}a^{\beta}a^{\gamma}a_{\mu}^{\delta}a_{\gamma\delta\nu}^{\mu}a_{\alpha\beta\eta}^{\nu}x^{\eta}$ & $K_{52}=a^{\alpha}a_{\nu}^{\beta}a_{\delta}^{\gamma}a_{\beta\gamma\mu}^{\delta}a_{\alpha pr}^{\mu}a_{\eta qs}^{\nu}x^{\eta}\varepsilon^{pq}\varepsilon^{rs}$\tabularnewline
			$K_{49}=a^{\alpha}a^{\beta}a^{\gamma}a_{\mu\nu\eta}^{\delta}a_{\gamma\delta p}^{\mu}a_{\alpha\beta q}^{\nu}x^{\eta}\varepsilon^{pq}$ & $K_{53}=a^{\alpha}a_{\mu}^{\beta}a_{\beta\delta\nu}^{\gamma}a_{\gamma kp}^{\delta}a_{\alpha qr}^{\mu}a_{\eta sl}^{\nu}x^{\eta}\varepsilon^{pq}\varepsilon^{rs}\varepsilon^{kl}$\tabularnewline
		\end{tabular}
		\par\end{center}
	
	\begin{center}
		\begin{tabular}{cc}
			$K_{54}=a^{\alpha}a_{\gamma}^{\beta}a_{\alpha\beta\delta}^{\gamma}a_{\mu\eta\nu}^{\delta}x^{\mu}x^{\nu}x^{\eta}$ & $K_{55}=a^{\alpha}a_{\mu\nu p}^{\beta}a_{\gamma\delta r}^{\gamma}a_{\beta qk}^{\delta}a_{\alpha lm}^{\mu}a_{\eta sn}^{\nu}x^{\eta}\varepsilon^{pq}\varepsilon^{rs}\varepsilon^{kl}\varepsilon^{mn}$\tabularnewline
		\end{tabular}
		\par\end{center}
	
	\begin{center}
		\begin{tabular}{l}
			$K_{56}=a^{\alpha}a_{\alpha\gamma\delta}^{\beta}a_{\eta\beta p}^{\gamma}a_{\mu\nu q}^{\delta}x^{\mu}x^{\nu}x^{\eta}\varepsilon^{pq}$\tabularnewline
			$K_{57}=a_{\mu}^{\alpha}a_{\gamma}^{\beta}a_{\alpha\beta\delta}^{\gamma}a_{\eta pr}^{\delta}a_{\nu qs}^{\mu}x^{\nu}x^{\eta}\varepsilon^{pq}\varepsilon^{rs}$\tabularnewline
			$K_{58}=a_{\delta}^{\alpha}a_{\alpha\gamma\mu}^{\beta}a_{\beta kp}^{\gamma}a_{\eta qr}^{\delta}a_{\nu sl}^{\mu}x^{\nu}x^{\eta}\varepsilon^{pq}\varepsilon^{rs}\varepsilon^{kl}$\tabularnewline
			$K_{59}=a_{\beta}^{\alpha}a_{\alpha\gamma\delta}^{\beta}a_{\mu\eta\nu}^{\gamma}x^{\delta}x^{\mu}x^{\nu}x^{\eta}$\tabularnewline
			$K_{60}=a_{\delta\mu p}^{\alpha}a_{\beta\gamma r}^{\beta}a_{\alpha qk}^{\gamma}a_{\nu lm}^{\delta}a_{\eta sn}^{\mu}x^{\nu}x^{\eta}\varepsilon^{pq}\varepsilon^{rs}\varepsilon^{kl}\varepsilon^{mn}$\tabularnewline
			$K_{61}=a_{\beta\gamma\nu}^{\alpha}a_{\alpha\eta p}^{\beta}a_{\delta\mu q}^{\gamma}x^{\delta}x^{\mu}x^{\nu}x^{\eta}\varepsilon^{pq}$\tabularnewline
		\end{tabular}
		\par\end{center}
	
	\begin{center}
		\begin{tabular}{ll}
			$K_{62}=a^{\alpha}a^{\beta}a^{\gamma}a_{\eta}^{\delta}a_{\nu\delta\gamma}^{\mu}a_{\mu\beta p}^{\nu}a_{\alpha\xi q}^{\eta}x^{\xi}\varepsilon^{pq}$ & $K_{69}=a_{\beta\gamma\delta}^{\alpha}a_{\nu\alpha\mu}^{\beta}a_{\xi pr}^{\gamma}a_{\eta qs}^{\delta}x^{\mu}x^{\nu}x^{\eta}x^{\xi}\varepsilon^{pq}\varepsilon^{rs}$ \tabularnewline
			$K_{63}=a^{\alpha}a^{\beta}a^{\gamma}a_{\mu\nu\eta}^{\delta}a_{\xi\gamma\delta}^{\mu}a_{\beta pr}^{\nu}a_{\alpha rs}^{\eta}x^{\xi}\varepsilon^{pq}\varepsilon^{rs}$  & $K_{70}=a^{\alpha}a^{\beta}a^{\gamma}a^{\delta}a^{\mu}a_{\xi\varphi\eta}^{\nu}a_{\delta\nu\mu}^{\eta}a_{\beta\gamma\alpha}^{\xi}x^{\varphi}$\tabularnewline
			$K_{64}=a^{\alpha}a^{\beta}a_{\nu}^{\gamma}a_{\mu\gamma\beta}^{\delta}a_{\delta\alpha p}^{\mu}a_{\eta\xi q}^{\nu}x^{\eta}x^{\xi}\varepsilon^{pq}$ & $K_{71}=a^{\alpha}a^{\beta}a^{\gamma}a^{\delta}a_{\xi\nu\eta}^{\mu}a_{\delta\mu\varphi}^{\nu}a_{\beta\gamma\alpha}^{\eta}x^{\xi}x^{\varphi}$\tabularnewline
			$K_{65}=a^{\alpha}a^{\beta}a_{\delta\mu\nu}^{\gamma}a_{\xi\gamma\eta}^{\delta}a_{\beta pr}^{\mu}a_{\alpha qs}^{\nu}x^{\eta}x^{\xi}\varepsilon^{pq}\varepsilon^{rs}$  & $K_{72}=a^{\alpha}a^{\beta}a^{\gamma}a_{\xi\mu\nu}^{\delta}a_{\delta\eta\varphi}^{\mu}a_{\beta\gamma\alpha}^{\nu}x^{\eta}x^{\xi}x^{\varphi}$\tabularnewline
			$K_{66}=a^{\alpha}a_{\mu}^{\beta}a_{\delta\beta\alpha}^{\gamma}a_{\gamma\xi p}^{\delta}a_{\nu\eta q}^{\mu}x^{\nu}x^{\eta}x^{\xi}\varepsilon^{pq}$ & $K_{73}=a^{\alpha}a^{\beta}a_{\delta\xi\mu}^{\gamma}a_{\gamma\eta\varphi}^{\delta}a_{\beta\alpha\nu}^{\mu}x^{\nu}x^{\eta}x^{\xi}x^{\varphi}$\tabularnewline
			$K_{67}=a^{\alpha}a_{\gamma\delta\mu}^{\beta}a_{\eta\beta\nu}^{\gamma}a_{\alpha pr}^{\delta}a_{\xi qs}^{\mu}x^{\nu}x^{\eta}x^{\xi}\varepsilon^{pq}\varepsilon^{rs}$  & $K_{74}=a^{\alpha}a_{\delta\xi\gamma}^{\beta}a_{\beta\eta\varphi}^{\gamma}a_{\alpha\mu\nu}^{\delta}x^{\mu}x^{\nu}x^{\eta}x^{\xi}x^{\varphi}$\tabularnewline
			$K_{68}=a_{\delta}^{\alpha}a_{\gamma\alpha\xi}^{\beta}a_{\beta\eta p}^{\gamma}a_{\mu\nu q}^{\delta}x^{\mu}x^{\nu}x^{\eta}x^{\xi}\varepsilon^{pq}$ & $K_{75}=a_{\beta\delta\gamma}^{\alpha}a_{\alpha\mu\nu}^{\beta}a_{\xi\eta\varphi}^{\gamma}x^{\delta}x^{\mu}x^{\nu}x^{\eta}x^{\xi}x^{\varphi}$\tabularnewline
		\end{tabular}
		\par\end{center}

	and $\varepsilon ^{pq}=\varepsilon _{pq}=q-p$. 
	
\end{theorem}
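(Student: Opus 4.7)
The plan is to apply the constructive method developed in Section 2 (culminating in the previous theorem) to the specific algebra $\Bbbk[\mathcal{C}(2,\mathbb{R},\{0,1,3\}) \times \mathbb{R}^2]^{GL(2,\mathbb{R})}$. First I would enumerate the admissible multidegrees $(d_0,d_1,d_3,\delta) \in \mathbb{N}^4$ satisfying the congruence $d_1 + d_3 - d_0 - \delta \equiv 0 \pmod{2}$ from \eqref{ConInv}, organized by total degree $d = d_0+d_1+d_3+\delta$. For each such type, I would construct the generating family $\mathcal{F}_{(d_0,d_1,d_3,\delta)}$ by applying all possible complete contractions and alternations (using $\varepsilon^{pq}$ and $\varepsilon_{pq}$) to the tensor \eqref{tonsorfurmula}, thereby producing a finite list of candidate covariants obtained by the Gurevich machinery.

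Next, working degree by degree starting from $d=1$, I would extract a basis $\mathcal{B}_{(d_0,d_1,d_3,\delta)}$ from each $\mathcal{F}_{(d_0,d_1,d_3,\delta)}$ via the vector-association method illustrated for the types $(0,0,2,0)$ and $(0,0,4,0)$: decompose each candidate over the totally ordered monoid $\mathcal{M}_{(d_0,d_1,d_3,\delta)}$ using the lexicographic order \eqref{cubicorder}, assemble coefficient vectors, and select a maximal linearly independent subfamily via Algorithm~2 of \cite{DahiraDaliandSuiSunCheng}. At each degree, I would then separate $\mathcal{B}_{(d_0,d_1,d_3,\delta)}$ into (i) monomials in already-produced generators of smaller degree and (ii) genuinely new generators; the latter are appended to the running list, producing the candidates $J_1,\ldots,J_{47}$ and $K_1,\ldots,K_{75}$.

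To confirm the upper bound $D=9$ and apply the previous theorem, I would show that every covariant of type $(d_0,d_1,d_3,\delta)$ with $d_0+d_1+d_3+\delta \geq 10$ lies in the ideal $\left\langle \bigoplus_{1 \leq d \leq 9} \mathcal{B}_{(d_0,d_1,d_3,\delta)} \right\rangle$. Concretely, I would verify that at every degree $d \geq 10$ and every admissible type, the vector associated with each Gurevich-output covariant is already a $\Bbbk$-linear combination of the vectors associated with products $J_1^{\alpha_1^i}\cdots J_{47}^{\alpha_{47}^i} K_1^{\beta_1^i}\cdots K_{75}^{\beta_{75}^i}$ of the listed generators (a membership-in-ideal test). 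The stationarity of the ascending chain $(\mathcal{I}_d)_{d\geq 1}$ guaranteed by Hilbert's basis theorem ensures this stabilization must occur at some finite stage, and the claim is that it occurs at $d=9$.

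The main obstacle will be the sheer combinatorial size of the computations: the number of admissible types grows rapidly with $d$, each $\mathcal{F}_{(d_0,d_1,d_3,\delta)}$ can be large because of the many ways of contracting $\varepsilon$-symbols against products of $a^j$, $a^j_\alpha$ and $a^j_{\alpha\beta\gamma}$, and the monomial sets $\mathcal{M}_{(d_0,d_1,d_3,\delta)}$ blow up correspondingly. In particular, verifying \emph{minimality} of $\mathcal{S}(\Omega)$ (that no listed $J_i$ or $K_j$ is a polynomial in the others) and verifying \emph{completeness} (that $D=9$ really suffices, so that no further generator appears at degrees $10, 11, \ldots$) both reduce to high-dimensional rank computations on explicit rational matrices; these are the critical steps where the Aronhold-free determinantal approach of \cite{DahiraDaliandSuiSunCheng} replaces symbolic manipulation with finite linear algebra, and must be executed exhaustively across all relevant types up through and slightly beyond degree $9$.
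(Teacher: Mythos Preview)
Your proposal is correct and follows exactly the paper's own argument: the paper gives no separate proof of this theorem but simply states that one applies the degree-by-degree constructive procedure of Section~2 (Gurevich enumeration of $\mathcal{F}_{(d_0,d_1,d_3,\delta)}$, extraction of bases $\mathcal{B}_{(d_0,d_1,d_3,\delta)}$ by the vector/rank method and Algorithm~2 of \cite{DahiraDaliandSuiSunCheng}, then the preceding theorem) to $\Omega=\{0,1,3\}$ through degree~$9$. One small correction: for $n=2$ and $\Omega=\{0,1,3\}$ the balance condition works out to $d_0+\delta\equiv 0\pmod 2$, not $d_1+d_3-d_0-\delta\equiv 0\pmod 2$ --- test it on $J_7$, of type $(2,0,1,0)$ --- so with your stated congruence you would skip about half of the admissible types, including those of many listed generators.
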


\section{Syzygies between centro-affine covariants of  $\mathcal{S}(2,\mathbb{R},\left \{ 0,1,3\right \} )$}

$S$ is said to be a syzygy between the elements of  $\mathcal{S}(2,\mathbb{R},\left \{ 0,1,3\right \} )$ if and only if $S\equiv 0$ is an identity with respect to the variables from $a\in \mathcal{C}(n,\Bbbk ,\Omega )$ and the contravariant vector $x$, but not an identity with respect to the elements from $\mathcal{C}(n,\Bbbk ,\Omega )$.

A minimal system of $27$ syzygies relating elements of the minimal system of
(\ref{system}) where $\Omega =\left \{ 0,1,2\right \} $ is found in \cite[Theorem 17.1]{Sibirskii} and \cite{DanilyukandSibirskii}.

The action of the general linear group $GL(2)$ on $\mathbb{R}^{2}:(Q,x)\mapsto Qx$ induces a representation 
\begin{equation*}
\begin{tabular}{llll}
$\rho :$ & $GL(2)$ & $\rightarrow $ & $GL(\mathcal{C}(\mathbb{R},\Omega ))$
\\ 
& $Q$ & $\mapsto $ & $\rho (Q)$
\end{tabular}
\  \  \  \ 
\end{equation*}
defined by

\begin{equation}
\begin{tabular}{lll}
$\rho (Q)a^{j}$ & $=$ & $\displaystyle \sum_{i=1}^{2}Q_{i}^{j}a^{i}$ \\ 

$\rho (Q)a_{\alpha _{1}}^{j}$& $=$ & $\displaystyle
\sum _{i=1}^{2}\sum _{\alpha _{1}=1}^{2}\sum _{\beta _{1}=1}^{2}Q_{i}^{j}P_{\alpha _{1}}^{\beta _{1}}a_{\beta _{1}}^{i}$\\

$\rho (Q)a_{\alpha _{1}\alpha _{2}\alpha _{3}}^{j}$ & $=$ & $\displaystyle
\sum _{i=1}^{2}\sum _{\alpha _{1}=1}^{2}\sum _{\alpha _{2}=1}^{2}\sum
_{\alpha _{3}=1}^{2}\sum _{\beta _{1}=1}^{2}\sum _{\beta _{2}=1}^{2}\sum
_{\beta _{3}=1}^{2}Q_{i}^{j}P_{\alpha _{1}}^{\beta _{1}}P_{\alpha
	_{2}}^{\beta _{2}}P_{\alpha _{3}}^{\beta _{3}}a_{\beta _{1}\beta _{2}\beta
	_{3}}^{i}$
\end{tabular}
\ ,j=1,2  \label{laws}
\end{equation}

where $Q$ is a matrix of $GL(2)$ and $P$ its inverse. The formula (\ref{laws}) is called the formula of the centro-affine transformations.

\begin{theorem}
	If $J_{7}\neq 0$, then the system (\ref{syscubic}) can be transformed into
	the following form 
	$$\left \{ 
	\begin{array}{l}
	\frac{dy^{1}}{dt}=b^{1}+b^{1}_{1}y^{1}+b^{1}_{2}y^{2}+b_{111}^{1}(y^{1})^{3}+3b_{112}^{1}(y^{1})^{2}y^{2}+3b_{122}^{1}y^{1}(y^{2})^{2}+b_{222}^{1}(y^{2})^{3} \\ 
	\frac{dy^{2}}{dt}=b^{2}+b^{2}_{1}y^{1}+b^{2}_{2}y^{2}+b_{111}^{2}(y^{1})^{3}+3b_{112}^{2}(y^{1})^{2}y^{2}+3b_{122}^{2}y^{1}(y^{2})^{2}+b_{222}^{2}(y^{2})^{3}
	\end{array}
	\right. .$$
	where
	$b^{1}=0$, $b^{2}=1$, $b^{1}_{1}=J_{1}J_{7}-J_{13}$, $b^{1}_{2}=J_{6}$, $b^{2}_{1}=\frac{1}{J_{7}^{2}}(J_{3}J_{7}+\frac{1}{2}J_{4}J_{6})$, $b^{2}_{2}=\frac{J_{13}}{J_{7}}$,\\ $b_{111}^{1}=\frac{1}{J_{7}^{3}}(J_{7}J_{25}+\frac{1}{2}J_{4}J_{30})$, $b_{112}^{1}=\frac{1}{J_{7}^{2}}(\frac{1}{2}J_{4}J_{20}-J_{7}J_{14})$, $b_{122}^{1}=\frac{1}{J_{7}}(J_{7}^{2}-J_{30})$, $b_{222}^{1}=-J_{20} $,\\ $b_{111}^{2}=\frac{1}{J_{7}^{4}}\left( \frac{1}{4}
	J_{4}^{2}J_{20}-J_{7}(J_{4}J_{14}+J_{7}(J_{10}+J_{11}))\right) $,  $b_{112}^{2}=\frac{1}{J_{7}^{3}}\left( \frac{1}{2}
	J_{4}(J_{7}^{2}-J_{30})-J_{7}J_{25})\right)$, \\ $b_{122}^{2}=\frac{1}{J_{7}^{2}}\left( J_{7}J_{14}-\frac{1}{2}J_{4}J_{20}\right)$, $b_{222}^{2}=\frac{J_{30}}{J_{7}}$.

\end{theorem}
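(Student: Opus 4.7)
The plan is to exhibit a specific element $Q\in GL(2,\mathbb{R})$ (depending polynomially on the coefficients $a$) whose inverse $P$ realizes the claimed normalization, and then to compute the fourteen transformed coefficients $b^j,b^j_\alpha,b^j_{\alpha\beta\gamma}$ directly from the centro-affine transformation laws \eqref{laws}, matching them term by term against the stated expressions in the $J_i$.

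First I would set up the transformation. Since the desired free terms are $b^1=0,\; b^2=1$, the relation $\rho(Q)a^j=\sum_i Q^j_i a^i$ forces the column vector $(a^1,a^2)^T$ to be the second column of $P=Q^{-1}$. This imposes two of the four conditions on $P$; the remaining column is then chosen so that, after transformation, the coefficient $b^1_{222}$ of $(y^2)^3$ in the first equation equals $-J_{20}$. Concretely, since $J_{20}=a^\alpha a^\beta a^\gamma a^p a^q_{\alpha\beta\gamma}\varepsilon_{pq}$ is an absolute covariant computed precisely from the cubic part evaluated along $a$, picking the first column of $P$ to be any vector independent of $(a^1,a^2)$ and rescaling by $\det P$ fixes the remaining ambiguity; the nonvanishing of $J_7$ is what guarantees this rescaling is well defined. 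I would verify at this stage that the matrix $Q$ so constructed is invertible exactly when $J_7\neq 0$.

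Once $Q$ is pinned down, the computation is routine but lengthy: substitute into \eqref{laws} the explicit contractions that give the new coefficients, collect terms, and repeatedly recognize that the resulting polynomial expressions in the $a^j,a^j_\alpha,a^j_{\alpha\beta\gamma}$ agree with the invariants $J_1,J_3,J_4,J_6,J_7,J_{10},J_{11},J_{13},J_{14},J_{20},J_{25},J_{30}$ listed in Theorem~2. I would proceed degree by degree in the $J_i$: the linear coefficients $b^j_i$ only involve $a^j$, $a^j_\alpha$ and a single cubic block, so they collapse to expressions of low weight such as $J_1J_7-J_{13}$, $J_6$, and $(J_3J_7+\tfrac12 J_4J_6)/J_7^2$. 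The cubic coefficients $b^j_{\alpha\beta\gamma}$ carry higher powers of $1/J_7$ coming from the normalization $\det P$, explaining the $J_7^2$, $J_7^3$, $J_7^4$ denominators in $b^1_{111},b^2_{112},b^2_{111}$.

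The main obstacle is bookkeeping, not conceptual: one must unambiguously identify, for each of the fourteen transformed coefficients, the specific combinatorial pattern of contractions that emerges from \eqref{laws}, and then recognize it as one of the explicit $J_i$ formulas in Theorem~2 rather than as a nontrivially reducible combination. To keep this manageable I would exploit the multigrading established in Section~2: each $b^j_{\cdots}$ has a predetermined type $(d_0,d_1,d_3,\delta)$, so the candidate $J_i$ appearing in its expression must lie in the finite list of invariants of that type already computed via the algorithm of the previous section. This reduces the verification to a finite linear algebra check in each graded piece, using the bases $\mathcal{B}_{(d_0,d_1,d_3,\delta)}$, rather than to a direct symbolic expansion in the fourteen scalar components of $a$.
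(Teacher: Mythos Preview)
Your plan has the right shape—build an explicit $Q$ from the coefficients and compute each $b^j_{\cdots}$ via the transformation laws—but the way you pin down $Q$ is incomplete, and this is a genuine gap.

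Fixing the second column of $P$ to be $(a^1,a^2)^T$ gives two conditions. You then try to fix the first column by demanding $b^1_{222}=-J_{20}$. But $b^1_{222}=q^1_i\,p^{j_1}_2p^{j_2}_2p^{j_3}_2\,a^i_{j_1j_2j_3}$ involves only the second column of $P$ and the first row of $Q=\tfrac{1}{\det P}(a^2,-a^1)$; the condition $b^1_{222}=-J_{20}$ is equivalent simply to $\det P=1$. That is a single scalar constraint, so the first column of $P$ is determined only up to adding a multiple of $(a^1,a^2)^T$. Different members of this one-parameter family give different transformed systems with different $b^j_\alpha$ and $b^j_{\alpha\beta\gamma}$, so the specific formulas in the statement cannot be derived from your data. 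Your multigrading argument does not save this: the transformed coefficients are centro-affine invariants only if the entries of $Q$ are themselves covariants of the correct type, and an ``arbitrary independent vector, then rescale'' prescription is not equivariant.

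The paper removes this ambiguity by a concrete covariant choice: it sets
\[
q=\begin{pmatrix} a^2 & -a^1\\[2pt] \tfrac{1}{J_7}a^\alpha a^\beta_{\beta\alpha 1} & \tfrac{1}{J_7}a^\alpha a^\beta_{\beta\alpha 2}\end{pmatrix},
\]
so that the second row of $q$ is the gradient of the covariant $K_3=a^\alpha a^\beta_{\beta\alpha\gamma}x^\gamma$, and then $\det q=J_7/J_7=1$ automatically—this is where $J_7\neq 0$ actually enters, not merely as a rescaling factor. With this equivariant $q$ in hand, each $b^j_{\cdots}$ becomes a fixed tensorial contraction in the original coefficients, which is then identified with the stated combination of $J_i$'s by direct computation. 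Your proposal needs exactly this missing ingredient: an explicit equivariant choice of the remaining row of $Q$, built from the cubic coefficients, that makes $\det q=1$.
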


\proof

Since $J_{7}\neq 0$, we can consider the following matrix

\begin{equation}
q=
\begin{pmatrix}
a^{2} & -a^{1} \\ 
\frac{1}{J_{7}}a^{\alpha }a_{\beta \alpha 1}^{\beta } & \frac{1}{J_{7}}
a^{\alpha }a_{\beta \alpha 2}^{\beta }
\end{pmatrix}.
\label{varch}
\end{equation}

The matrix $q$ is invertible since $\det q=1$. Then, consider the centro-affine transformation

\begin{equation*}
y^{j}=q_{i}^{j}x^{i},i,j=1,2
\end{equation*}

i.e.

\begin{equation*}
y^{1}=-K_{1},\, \,y^{2}=\frac{K_{3}}{J_{7}}
\end{equation*}

The system (\ref{syscubic}) can be transformed into the system
\begin{equation*}
\frac{dy^{j}}{dt}=b^{j}+b^{j}_{\alpha}y^{\alpha }+b_{\alpha \beta \gamma }^{j}y^{\alpha }y^{\beta}y^{\gamma },j,\alpha ,\beta ,\gamma=1,2
\end{equation*}

where for $j,\alpha ,\beta ,\gamma =1,2,$
\begin{equation*}
b^{j}=q_{\alpha }^{j}a^{\alpha },\; b_{\alpha}^{j}=q_{i}^{j}p_{\alpha }^{j_{1}}a_{j_{1}}^{i} \text{ and }b_{\alpha \beta \gamma }^{j}=q_{i}^{j}p_{\alpha }^{j_{1}}p_{\beta }^{j_{2}}p_{\gamma}^{j_{3}}a_{j_{1}j_{2}j_{3}}^{i},i,j_{1},j_{2},j_{3}=1,2
\end{equation*}
and $p$ is the inverse of $q.$ Hence,

\begin{itemize}[leftmargin=1cm] 
	\item[$b^{1}$] $=q_{i}^{1}a^{i}=0$

	\item[$b^{2}$] $=q_{i}^{2}a^{i}=1$

	\item[$b^{1}_{1}$] $=q_{i}^{1}p_{1}^{j_{1}}a_{j_{1}}^{i}=\frac{1}{J_{7}}(a^{\mu }\varepsilon _{i\mu })(a^{\alpha }a_{\beta
		\alpha i}^{\beta }\varepsilon ^{j_{1}i})(a_{j_{1}}^{i})=\frac{1}{J_{7}} a^{\alpha}a^{q}a^{p}_{r}a^{\beta}_{\beta \alpha s}\varepsilon _{pq}\varepsilon ^{rs} =\frac{1}{J_{7}}(J_{1}J_{7}-J_{13}) $

	\item[$b^{1}_{2}$] $=q_{i}^{1}p_{2 }^{j_{1}}a_{j_{1}}^{i}=(a^{\mu }\varepsilon _{i\mu })(a^{j_{1}})(a_{j_{1}}^{i})=a^{\alpha}a^{q}a^{p}_{\alpha}\varepsilon _{pq} =J_{6}$
	
	\item[$b^{2}_{1}$] $=q_{i}^{2}p_{1 }^{j_{1}}a_{j_{1}}^{i}=\frac{1}{J_{7}^{2}}(a^{\alpha }a_{\beta \alpha i}^{\beta
	})(a^{\alpha }a_{\beta
		\alpha i}^{\beta }\varepsilon ^{j_{1}i})(a_{j_{1}}^{i})= \frac{1}{J_{7}^{2}}a^{\alpha}a^{\beta}a^{\gamma}_{p}a^{\delta}_{\delta \gamma \alpha}a^{\mu}_{\mu \beta q}\varepsilon ^{pq}=\frac{1}{J_{7}^{2}}(J_{3}J_{7}+\frac{1}{2}J_{4}J_{6}) $

	\item[$b^{2}_{2}$] $=q_{i}^{2}p_{2 }^{j_{1}}a_{j_{1}}^{i}=\frac{1}{J_{7}}(a^{\alpha }a_{\beta \alpha i}^{\beta
	})(a^{j_{1}})(a_{j_{1}}^{i})=\frac{1}{J_{7}}a^{\alpha}a^{\beta}a^{\gamma}_{\beta}a^{\delta}_{\delta \gamma \alpha}=\frac{J_{13}}{J_{7}}$

	\item[$b_{111}^{1}$] $
	=q_{i}^{1}p_{1}^{j_{1}}p_{1}^{j_{2}}p_{1}^{j_{3}}a_{j_{1}j_{2}j_{3}}^{i}$
	\newline
	$=\frac{1}{J_{7}^{3}}(a^{\mu }\varepsilon _{i\mu })(a^{\alpha }a_{\beta
		\alpha i}^{\beta }\varepsilon ^{j_{1}i})(a^{\alpha }a_{\beta \alpha
		i}^{\beta }\varepsilon ^{j_{2}i})(a^{\alpha }a_{\beta \alpha i}^{\beta
	}\varepsilon ^{j_{3}i})a_{j_{1}j_{2}j_{3}}^{i}$\newline
	$=\frac{1}{J_{7}^{3}}(a^{\alpha }a^{\beta }a^{\gamma }a^{q}a_{\delta \alpha
		s}^{\delta }a_{\mu \beta l}^{\mu }a_{\nu \gamma n}^{\nu
	}a_{rkm}^{p}\varepsilon _{pq}\varepsilon ^{rs}\varepsilon ^{kl}\varepsilon
	^{mn})$\newline
	$=\frac{1}{J_{7}^{3}}(J_{7}J_{25}+\frac{1}{2}J_{4}J_{30}) $
	
	\item[$b_{112}^{1}$] $
	=q_{i}^{1}p_{2}^{j_{1}}p_{1}^{j_{2}}p_{1}^{j_{3}}a_{j_{1}j_{2}j_{3}}^{i}$
	\newline
	$=\frac{1}{J_{7}^{2}}(a^{\mu }\varepsilon _{i\mu })(a^{j_{1}})(a^{\alpha
	}a_{\beta \alpha i}^{\beta }\varepsilon ^{j_{2}i})(a^{\alpha }a_{\beta
		\alpha i}^{\beta }\varepsilon ^{j_{3}i})a_{j_{1}j_{2}j_{3}}^{i}$\newline
	$=\frac{1}{J_{7}^{2}}(a^{\alpha }a^{\beta }a^{\gamma }a^{q}a_{\delta \beta
		s}^{\delta }a_{\mu \gamma l}^{\mu }a_{\alpha rk}^{p}\varepsilon
	_{pq}\varepsilon ^{rs}\varepsilon ^{kl})$\newline
	$=\frac{1}{J_{7}^{2}}(\frac{1}{2}J_{4}J_{20}-J_{7}J_{14})$
	
	\item[$b_{122}^{1}$] $
	=q_{i}^{1}p_{2}^{j_{1}}p_{2}^{j_{2}}p_{1}^{j_{3}}a_{j_{1}j_{2}j_{3}}^{i}$%
	\newline
	$=\frac{1}{J_{7}}(a^{\mu }\varepsilon _{i\mu
	})(a^{j_{1}})(a^{j_{2}})(a^{\alpha }a_{\beta \alpha i}^{\beta }\varepsilon
	^{j_{3}i})a_{j_{1}j_{2}j_{3}}^{i}$\newline
	$=\frac{1}{J_{7}}(a^{\alpha }a^{\beta }a^{\gamma }a^{q}a_{\delta \gamma
		s}^{\delta }a_{\alpha \beta r}^{p}\varepsilon _{pq}\varepsilon ^{rs})$
	\newline
	$=\frac{1}{J_{7}}(J_{7}^{2}-J_{30})$	
	\item[$b_{222}^{1}$] $
	=q_{i}^{1}p_{2}^{j_{1}}p_{2}^{j_{2}}p_{2}^{j_{3}}a_{j_{1}j_{2}j_{3}}^{i}$
	\newline
	$=(a^{\mu }\varepsilon _{i\mu
	})(a^{j_{1}})(a^{j_{2}})(a^{j_{3}})a_{j_{1}j_{2}j_{3}}^{i}$\newline
	$=a^{\alpha }a^{\beta }a^{\gamma }a^{q}a_{\alpha \beta \gamma
	}^{p}\varepsilon _{pq}$\newline
	$=-J_{20}$
	\item[$b_{111}^{2}$] $
	=q_{i}^{2}p_{1}^{j_{1}}p_{1}^{j_{2}}p_{1}^{j_{3}}a_{j_{1}j_{2}j_{3}}^{i}$%
	\newline
	$=\frac{1}{J_{7}^{4}}(a^{\alpha }a_{\beta \alpha i}^{\beta })(a^{\alpha
	}a_{\beta \alpha i}^{\beta }\varepsilon ^{j_{1}i})(a^{\alpha }a_{\beta
		\alpha i}^{\beta }\varepsilon ^{j_{2}i})(a^{\alpha }a_{\beta \alpha
		i}^{\beta }\varepsilon ^{j_{3}i})a_{j_{1}j_{2}j_{3}}^{i}$\newline
	$=\frac{1}{J_{7}^{4}}(a^{\alpha }a^{\beta }a^{\gamma }a^{\delta }a_{\mu
		\alpha \varphi }^{\mu }a_{\nu \beta q}^{\nu }a_{\eta \gamma s}^{\eta
	}a_{\lambda \delta l}^{\lambda }a_{prk}^{\varphi }\varepsilon
	^{pq}\varepsilon ^{rs}\varepsilon ^{kl})$\newline
	$=\frac{1}{J_{7}^{4}}\left( \frac{1}{4}
	J_{4}^{2}J_{20}-J_{7}(J_{4}J_{14}+J_{7}(J_{10}+J_{11}))\right) $	
	\item[$b_{112}^{2}$] $
	=q_{i}^{2}p_{2}^{j_{1}}p_{1}^{j_{2}}p_{1}^{j_{3}}a_{j_{1}j_{2}j_{3}}^{i}$%
	\newline
	$=\frac{1}{J_{7}^{3}}(a^{\alpha }a_{\beta \alpha i}^{\beta
	})(a^{j_{1}})(a^{\alpha }a_{\beta \alpha i}^{\beta }\varepsilon
	^{j_{2}i})(a^{\alpha }a_{\beta \alpha i}^{\beta }\varepsilon
	^{j_{3}i})a_{j_{1}j_{2}j_{3}}^{i}$\newline
	$=\frac{1}{J_{7}^{3}}(a^{\alpha }a^{\beta }a^{\gamma }a^{\delta }a_{\mu
		\gamma \lambda }^{\mu }a_{\nu \alpha q}^{\nu }a_{\eta \beta s}^{\eta
	}a_{\delta pr}^{\lambda }\varepsilon ^{pq}\varepsilon ^{rs})$\newline
	$=\frac{1}{J_{7}^{3}}\left( \frac{1}{2}
	J_{4}(J_{7}^{2}-J_{30})-J_{7}J_{25})\right) $	
	\item[$b_{122}^{2}$] $
	=q_{i}^{2}p_{2}^{j_{1}}p_{2}^{j_{2}}p_{1}^{j_{3}}a_{j_{1}j_{2}j_{3}}^{i}$
	\newline
	$=\frac{1}{J_{7}^{2}}(a^{\alpha }a_{\beta \alpha i}^{\beta
	})(a^{j_{1}})(a^{j_{2}})(a^{\alpha }a_{\beta \alpha i}^{\beta }\varepsilon
	^{j_{3}i})a_{j_{1}j_{2}j_{3}}^{i}$\newline
	$=\frac{1}{J_{7}^{2}}(a^{\alpha }a^{\beta }a^{\gamma }a^{\delta }a_{\mu
		\delta \eta }^{\mu }a_{\nu \alpha q}^{\nu }a_{\beta \gamma p}^{\eta
	}\varepsilon ^{pq})$\newline
	$=\frac{1}{J_{7}^{2}}\left( J_{7}J_{14}-\frac{1}{2}
	J_{4}J_{20}\right) $
	\item[$b_{222}^{2}$] $
	=q_{i}^{2}p_{2}^{j_{1}}p_{2}^{j_{2}}p_{2}^{j_{3}}a_{j_{1}j_{2}j_{3}}^{i}$
	\newline
	$=\frac{1}{J_{7}}(a^{\alpha }a_{\beta \alpha i}^{\beta
	})(a^{j_{1}})(a^{j_{2}})(a^{j_{3}})a_{j_{1}j_{2}j_{3}}^{i}$\newline
	$=\frac{1}{J_{7}}(a^{\alpha }a^{\beta }a^{\gamma }a^{\delta }a_{\mu \delta
		\nu }^{\mu }a_{\alpha \beta \gamma }^{\nu })$\newline
	$=\frac{J_{30}}{J_{7}}$.
\end{itemize}

\begin{theorem} 
	Ther exist $109$ syzygies between $J_{1},\ldots ,J_{47},K_{1},\ldots ,K_{75}.$ where

	\begin{itemize}[leftmargin=1cm]

		\item[$S_{J_{2}}$ :]

		$J_{7}^{2}J_{2}=J_{7}(J_{1}^{2}J_{7}-2J_{1}J_{13}+2J_{3}J_{6})+J_{4}J_{6}^{2}+2J_{13}^{2}$.

		\item[$S_{J_{5}}$ :]

		$J_{7}^{2}J_{5}=2(J_{4}(J_{31}-J_{30})+J_{7}(2J_{26}+3J_{25})+(J_{11}+J_{10})J_{20}+J_{14}^{2})$.
		
		\item[$S_{J_{8}}$ :]
		
		$6J_{7}^{2}J_{8}=3((J_{1}^{2}-J_{2})J_{7}+2(J_{21}-J_{1}J_{12}))J_{14}
		+6(J_{7}J_{12}-J_{40})+(2J_{6}J_{10}+6(J_{34}-J_{33}))J_{6}\\
		+6(J_{8}J_{30}+(J_{13}-J_{12})J_{22})$.

		\item[$S_{J_{9}}$ :]
		
		$2J_{7}^{2}J_{9}=[J_{1}(J_{5}J_{7}+2J_{25})-2(J_{3}J_{14}+(J_{11}+J_{10}))]J_{7}-2(J_{4}J_{6}J_{14}+2J_{13}J_{25})$.

		\item[$S_{J_{10}}$ :]
		
		$2J_{7}^{2}J_{10}=3(2(2J_{26}+3J_{25}-J_{5}J_{7})J_{14}+2(J_{11}
		+J_{10})(J_{30}-J_{31})+(2J_{19}-J_{5}^{2})J_{20})$.
		
		\item[$S_{J_{12}}$ :]

		$J_{7}^{2}J_{12}=J_{1}J_{7}({J_{7}}^{2}-J_{30})-(J_{3}J_{7}+J_{4}J_{6})J_{20}+J_{6}J_{7}J_{14}-({J_{7}}^{2}-2J_{30})J_{13}.
		$

		\item[$S_{J_{15}}$ :]
		
		$2J_{7}J_{15}=-J_{1}^{2}J_{7}(J_{5}+J_{4})+J_{1}(J_{4}(J_{13}+J_{12})-J_{5}J_{13}-2J_{7}J_{9})+(J_{2}J_{4}+2J_{3}^{2})J_{7}+2J_{3}J_{24}-2J_{4}J_{21}+2J_{9}J_{13}$.

		\item[$S_{J_{16}}$ :]

		$2J_{7}J_{16}=J_{1}^{2}(J_{26}-J_{4}J_{7})
		+(2J_{9}-J_{1}J_{5})(3J_{13}-J_{12})+J_{1}[2J_{3}J_{14}+J_{4}(J_{13}+J_{12})
		-2J_{6}(J_{11}+2J_{10})-2J_{7}J_{9}]
		+J_{2}(J_{7}(J_{5}+J_{4})-2J_{26})+2(J_{3}(J_{3}J_{7}+J_{22}-(J_{24}+J_{23}))-J_{4}J_{21}+2J_{6}(J_{18}+J_{17})-J_{8}J_{14})$.

		\item[$S_{J_{17}}$ :]

		$4J_{7}J_{17}=J_{5}(J_{1}J_{14}+J_{5}J_{6}-J_{3}J_{7})+2J_{1}J_{7}J_{10}+2J_{3}(J_{26}+J_{25})+J_{4}(J_{6}(J_{5}-J_{4})-2(J_{24}+J_{23}-J_{22}))-2J_{6}J_{19}-2J_{9}J_{14}+2(J_{11}+J_{10})(J_{13}-J_{12})$.

		\item[$S_{J_{18}}$ :]
		
		$2J_{7}J_{18}=(J_{3}J_{7}-J_{1}J_{14})(J_{5}-J_{4})
		-2J_{3}(J_{26}+2J_{25})-J_{5}J_{6}(J_{5}+J_{4})+2J_{4}J_{23}+2J_{6}J_{19}+2J_{9}J_{14}+2(J_{11}+J_{10})J_{12}$.

		\item[$S_{J_{19}}$ :]

		$2J_{7}^{2}J_{19}=J_{5}^{2}J_{7}^{2}+2J_{4}J_{14}^{2}+4J_{7}J_{14}(J_{11}+J_{10})+4J_{25}^{2}$.
		
		\item[$S_{J_{21}}$ :]
		
		$J_{7}J_{21}=J_{12}J_{13}-J_{6}J_{24}$.
		
		\item[$S_{J_{22}}$ :]
		$2J_{7}J_{22}=J_{4}(J_{1}J_{20}-J_{6}J_{7}-2J_{29})+2(J_{3}(J_{30}-J_{7}^{2})-J_{13}J_{14})$.
		
		\item[$S_{J_{23}}$ :]
		
		$2J_{7}J_{23}=J_{1}(J_{5}J_{20}-2J_{7}J_{14})+2(J_{3}(J_{30}-J_{31})+J_{6}(J_{26}+J_{25})-J_{9}J_{20}+J_{12}J_{14})$.
		
		\item[$S_{J_{24}}$ :]
		
		$2J_{7}J_{24}=J_{1}(J_{4}J_{20}+2J_{7}J_{14})+2(J_{3}(J_{30}-J_{7}^{2})-J_{4}(J_{6}J_{7}+J_{29})+J_{6}J_{25}-2J_{13}J_{14})$. 
		
		\item[$S_{J_{26}}$ :]
		
		$J_{7}^{2}J_{26}=(2J_{25}-J_{4}J_{7})(J_{31}-J_{30})+2J_{14}J_{41}+J_{20}J_{35}
		-J_{30}(J_{26}+2J_{25})$.
		
		\item[$S_{J_{27}}$ :]
		
		$	
		4J_{7}J_{27}=4J_{4}J_{32}-8J_{6}J_{28}+2J_{7}J_{8}(J_{4}+J_{5})
		+2J_{2}(J_{7}J_{11}+J_{4}J_{14} -2J_{35}) +J_{3}(8(J_{34}-J_{4}J_{12}+J_{7}J_{9})-12J_{33}+4J_{6}(2J_{10}+J_{11})+J_{13}(3J_{4}-J_{5})-4J_{3}J_{14})	+J_{1}(J_{6}(2J_{19}-J_{4}J_{5}-J_{5}^{2})-4J_{7}(J_{17}+J_{18})-2J_{4}(J_{22}+J_{23})
		+2J_{3}(J_{26}-J_{7}(J_{4}+2J_{5})))+2J_{1}^{2}(J_{35}-J_{4}J_{14}+J_{7}(2J_{10}+J_{11}))$.

		\item[$S_{J_{28}}$ :]
		
		$12J_{7}J_{28}=J_{3}(4J_{7}J_{10}+6J_{35})
		+J_{4}(2J_{6}J_{10}-3J_{1}J_{26}-3J_{5}J_{13}+6J_{34})
		+J_{13}(6J_{19}-3J_{5}^{2})+J_{25}(12J_{9}-6J_{1}J_{5})+12J_{24}(J_{11}+J_{10})$

		\item[$S_{J_{29}}$ :]
		$J_{7}J_{29}=J_{20}(J_{1}J_{7}-J_{13})-J_{6}J_{30}$.

		\item[$S_{J_{31}}$ :]
		$J_{7}^{2}J_{31}=(J_{7}^{2}-J_{30})^{2}-2J_{7}J_{14}J_{20}+J_{4}J_{20}^{2}+J_{30}^{2}$.

		\item[$S_{J_{32}}$ :]
		$J_{7}J_{32}=J_{1}(J_{7}J_{23}+J_{12}J_{14})+J_{3}J_{40}-J_{8}J_{30}-J_{13}J_{23}+J_{12}J_{22}-J_{14}J_{21}$.

		\item[$S_{J_{33}}$ :]
		
		$2J_{7}J_{33}=J_{7}(J_{1}(J_{26}+2J_{25})+J_{4}(J_{13}-J_{12}))+J_{14}(2J_{24}-2J_{3}J_{7}-J_{4}J_{6})-2J_{25}(J_{13}+J_{12})$.
		
		\item[$S_{J_{34}}$ :]
		$J_{7}J_{34}=J_{6}J_{35}+J_{13}J_{26}+2J_{25}(J_{13}-J_{12})+2J_{14}J_{24}$.

		\item[$S_{J_{35}}$ :]
		$3J_{7}J_{35}=3(J_{14}J_{25}+(J_{7}^{2}-J_{30})(J_{11}+J_{10})-J_{4}J_{41})-J_{7}^{2}J_{10}$.

		\item[$S_{J_{36}}$ :]

		$24J_{7}J_{36}=24J_{18}J_{21}+12J_{9}(2J_{32}+J_{7}J_{8})+12J_{6}(J_{4}(J_{16}-J_{15})-J_{8}(J_{11}+J_{10}))+12J_{3}(2(J_{42}+J_{7}J_{16}+J_{6}(J_{18}+2J_{17}))+J_{21}(2J_{4}-J_{5})+J_{8}J_{14})+24J_{3}^{2}(J_{22}-2J_{24}-J_{3}J_{7})+6J_{2}(J_{6}(J_{5}^{2}-2J_{19})+J_{3}(2(J_{26}+4J_{25})-J_{7}(J_{5}+3J_{4})-2J_{7}J_{18})+6J_{1}(24J_{12}J_{17}-12J_{9}(J_{23}+J_{22})+12J_{8}J_{25}+6J_{6}(8J_{28}-3J_{4}J_{9})-12J_{5}(J_{32}+J_{7}J_{8})+J_{3}(48J_{33}-36J_{34}-48J_{7}J_{9}-8J_{6}(3J_{11}+8J_{10})+6J_{5}(2J_{13}+J_{12})+12J_{4}(2J_{12}-3J_{13})+24J_{3}J_{14})+12J_{2}(2J_{35}-J_{7}J_{11}-J_{4}J_{14}))+J_{1}^{2}(6J_{5}(J_{23}+J_{22})-12(J_{10}J_{12}+J_{7}(J_{18}+J_{17}))+6J_{4}(J_{23}-J_{24}+J_{22})+3J_{4}J_{6}(5J_{5}-J_{4})-24J_{3}(J_{26}+2J_{25})+6J_{3}J_{7}(5J_{5}+J_{4}))+12J_{1}^{3}(J_{7}(J_{11}+J_{10})-J_{35}+J_{4}J_{14})$.

		\item[$S_{J_{37}}$ :]
		
		$12J_{7}J_{37}=3J_{4}[2(2J_{42}-J_{7}J_{16}-J_{6}J_{18}-J_{21}(3J_{5}+J_{4}))+J_{2}(J_{7}(J_{4}+2J_{5}) -2(J_{26}+J_{25}))+J_{3}(2(J_{22}-J_{24}-2J_{23}+J_{3}J_{7})+J_{6}(5J_{5}-2J_{4}) )] +J_{1}[J_{5}(6(J_{34}-J_{33})+9(J_{7}J_{9}+J_{1}J_{25}-J_{3}J_{14})-5J_{6}(3J_{11}+J_{10})-21/2J_{1}J_{5}J_{7})+3J_{4}(3J_{5}(J_{13}+J_{12})+J_{1}(2J_{26}+J_{25}-7/2J_{5}J_{7})
		+J_{4}(J_{13}+J_{12}-J_{1}J_{7})+J_{6}(J_{11}+3J_{10})-2(J_{34}+J_{33})+J_{7}J_{9}-J_{3}J_{14}) +J_{7}(6J_{1}J_{19}-4J_{3}J_{10} -12J_{28})+6J_{3}J_{35}] + J_{3} [12J_{7}J_{17}-24J_{43}+3J_{6}(6J_{19}-J_{5}^{2})+6J_{5}(5J_{24}-3J_{22})+4J_{10}(J_{13}-J_{12})+6J_{3}(J_{26}+2J_{25}+J_{5}J_{7}) ]  +12J_{9}(J_{33}-J_{34}) +3J_{5}(J_{2}(J_{5}J_{7} -6J_{25})+2(J_{6}J_{18}-J_{7}J_{16}))-4J_{8}(3J_{35}+J_{7}J_{10})-8J_{6}(3J_{38}+J_{9}J_{10})
		$.

		\item[$S_{J_{38}}$ :]

		$24J_{7}J_{38}=12J_{9}J_{35}+6J_{24}(J_{5}^{2}-2J_{19})
		+12J_{7}(J_{5}(2J_{18}+3J_{17})+J_{3}J_{19})+J_{4}[24J_{43}+12J_{7}(J_{1}J_{10}-J_{17})+3J_{6}(J_{5}^{2}+J_{4}J_{5}-2J_{19})+6J_{5}J_{24}-6J_{3}(J_{26}+2J_{25}-2J_{5}J_{7})+4J_{10}(J_{12}-J_{13})]
		-6J_{1}J_{5}(J_{35}+J_{7}(4J_{11}+6J_{10})) $.

		\item[$S_{J_{39}}$ :]

		$24J_{7}J_{39}=9J_{5}\left[
		J_{7}(2J_{19}-(J_{5}+J_{4})J_{5})+2J_{25}(J_{4}-J_{5})\right]
		+12J_{4}(6J_{44}+J_{10}J_{14})+\\
		+36\left[ J_{19}J_{25}+J_{35}(J_{10}+J_{11})\right]-8J_{7}J_{10}^{2}$.

		\item[$S_{J_{40}}$ :]
		
		$J_{7}J_{40}=J_{12}J_{30}+J_{20}J_{24}$.

		\item[$S_{J_{41}}$ :] 
		$J_{7}J_{41}=J_{20}J_{25}+J_{14}(J_{30}-J_{7}^{2})$.

		\item[$S_{J_{42}}$ :]

		$2J_{7}J_{42}=2J_{3}(J_{5}J_{29}-2J_{45})-J_{6}(J_{1}J_{35}+J_{6}J_{19}-4J_{7}J_{17}
		+16J_{43})+J_{24}(4J_{24}+6J_{23})+1/2(J_{5}+J_{4})(J_{5}J_{6}^{2}-12J_{12}J_{13}+8J_{7}J_{21}) +2J_{4}(J_{6}J_{23}-J_{8}J_{20}-2J_{12}^{2}+6J_{12}J_{13}-3J_{7}J_{21}
		)+6J_{12}(J_{34}-J_{33})-2/3J_{6}J_{10}(J_{13}+J_{12})-4J_{21}(J_{26}+J_{25})+2J_{22}(2J_{3}J_{7}-J_{24}-4J_{23})+(9J_{3}J_{6}-2J_{2}J_{7})(J_{26}+2J_{25})+(4(J_{3}^{2}-J_{15})+2J_{2}J_{4})(J_{30}-J_{31})
		+4J_{7}J_{9}J_{12}+2J_{13}J_{34}-4J_{14}J_{32}-4J_{18}J_{29}$.

		\item[$S_{J_{43}}$ :]
		
		$12J_{7}J_{43}=(6J_{35}+4J_{7}J_{10})(J_{13}-J_{12})-3(2J_{24}+J_{4}J_{6})(J_{26}+2J_{25})
		$.

		\item[$S_{J_{44}}$ :]
		$6J_{7}J_{44}=3J_{25}(J_{5}J_{7}-J_{26})-J_{14}(2J_{7}J_{10}+3J_{35})-6J_{25}^{2}$.

		\item[$S_{J_{45}}$ :]
		
		$2J_{7}J_{45}=2J_{7}^{2}(J_{22}-J_{24})+J_{4}(J_{7}J_{29}-J_{12}J_{20})+2(J_{12}(J_{7}J_{14}+J_{41})+J_{24}J_{31})$.

		\item[$S_{J_{46}}$ :]
		
		$J_{7}J_{46}=J_{14}(J_{4}J_{20}-2(J_{7}J_{14}+J_{41}))-J_{20}J_{35}+J_{26}J_{30}
		+2J_{25}(J_{30}-J_{31})$.
		
		\item[$S_{J_{47}}$ :]

		$2J_{7}J_{47}=J_{20}(J_{4}J_{20}-2(J_{41}+J_{7}J_{14}))+2J_{30}J_{31}$.

		\item[$S_{K_{2}}$ :]	
		$J_{7}K_{2}=J_{6}K_{3}+J_{13}K_{1}$.
		
		\item[$S_{K_{4}}$ :]
		
		$J_{7}K_{4}=K_{1}(K_{7}-J_{1}K_{3}-J_{3}K_{1})+K_{2}K_{3}$.

		\item[$S_{K_{5}}$ :]
		
		$2J_{7}K_{5}=2K_{3}^{2}+J_{4}K_{1}^{2}$.

		\item[$S_{K_{6}}$ :]
		$J_{7}K_{6}=J_{12}K_{3}+J_{24}K_{1}$.
		
		\item[$S_{K_{7}}$ :]	
		$2J_{7}K_{7}=2J_{13}K_{3}-J_{4}J_{6}K_{1}$.
		
		\item[$S_{K_{8}}$ :]
		$J_{7}K_{8}=J_{25}K_{1}+J_{14}K_{3}$.

		\item[$S_{K_{9}}$ :]
		
		$J_{7}K_{9}=K_{1}K_{17}+K_{3}K_{6}$.

		\item[$S_{K_{10}}$ :]
		
		$2J_{7}K_{10}=2J_{13}K_{5}+K_{1}(J_{4}(J_{1}K_{1}-2K_{2})-2J_{3}K_{3})$.

		\item[$S_{K_{11}}$ :]
		
		$J_{7}K_{11}=K_{1}K_{18}+K_{3}K_{8}$.

		\item[$S_{K_{12}}$ :]
		$J_{7}K_{12}=J_{20}K_{3}-J_{30}K_{1}$. 
		
		\item[$S_{K_{13}}$ :]
		
		$J_{7}K_{13}=K_{1}(J_{7}K_{3}+J_{14}K_{1}-K_{29})+K_{3}K_{12}$.

		\item[$S_{K_{14}}$ :]
		
		$J_{7}K_{14}=J_{24}K_{2}+J_{21}K_{3}-J_{6}K_{17}$.

		\item[$S_{K_{15}}$ :]
		
		$2J_{7}K_{15}=K_{1}(J_{4}(J_{13}-J_{12})-2J_{3}J_{14})+2(J_{14}K_{7}-J_{13}K_{8}+J_{22}K_{3})$.
		
		\item[$S_{K_{16}}$ :]

		$2J_{7}K_{16}=K_{1}(J_{4}(J_{13}-J_{12})-2(J_{3}J_{14}+J_{33}K_{1}))+2(K_{2}(J_{26}K_{2}+J_{25})-J_{6}(K_{19}+K_{18}))+4(J_{14}K_{6}-J_{12}K_{8})+2J_{23}K_{3}$.

		\item[$S_{K_{17}}$ :]
		
		$2J_{7}K_{17}=J_{1}(2J_{7}K_{8}+J_{4}(K_{12}+2J_{7}K_{1}))+2J_{3}(K_{29}-J_{14}K_{1}-J_{7}K_{3})-4J_{13}K_{8}+2J_{6}K_{18}-J_{4}(2K_{28}+J_{7}K_{2}+J_{6}K_{3})$.

		\item[$S_{K_{18}}$ :]
		
		$2J_{7}K_{18}=2J_{25}K_{3}-K_{1}(2J_{7}(J_{11}+J_{10})+J_{4}J_{14})$. 
		
		\item[$S_{K_{19}}$ :]
		
		$3J_{7}K_{19}=K_{1}(J_{7}(3J_{11}+2J_{10})+3(J_{4}J_{14}-J_{35}))+3J_{26}K_{3}$.

		\item[$S_{K_{20}}$ :]
		
		$J_{7}K_{20}=K_{1}(K_{1}K_{8}+K_{3}^{2}-K_{32})+K_{3}K_{13}$.

		\item[$S_{K_{21}}$ :]

		$J_{7}K_{21}=K_{1}(J_{1}K_{17}-J_{8}K_{3})+J_{24}K_{4}-K_{2}K_{17}+K_{3}K_{14}$.

		\item[$S_{K_{22}}$ :]
		
		$2J_{7}K_{22}=J_{1}J_{4}(K_{13}+K_{1}K_{3})-J_{4}(2K_{31}+J_{6}K_{5})+2J_{3}(K_{32}-K_{1}K_{8}-J_{7}K_{5})-2J_{13}K_{11}$.

		\item[$S_{K_{23}}$ :]
		$J_{7}K_{23}=2(K_{3}K_{16}+K_{6}K_{8})-(J_{12}K_{11}+J_{14}K_{9}+J_{23}K_{5})$.

		\item[$S_{K_{24}}$ :]
		$2J_{7}K_{24}=2K_{3}K_{17}-J_{4}K_{1}K_{6}$.

		\item[$S_{K_{25}}$ :]
		$J_{7}K_{25}=J_{25}K_{5}-K_{1}(K_{3}(J_{11}+J_{10})+J_{4}K_{8})$.
		
		$2J_{7}K_{25}=2K_{3}K_{18}-J_{4}K_{1}K_{8}$. 
		
		\item[$S_{K_{26}}$ :]

		$3J_{7}K_{26}=3K_{5}(J_{26}+4J_{25})-K_{1}(K_{3}(6J_{11}+8J_{10})+6K_{38})-12K_{3}K_{18}$.
		
		\item[$S_{K_{27}}$ :]
		$J_{7}K_{27}=K_{1}^{2}K_{11}+2K_{3}K_{20}-K_{5}K_{13}$.

		\item[$S_{K_{28}}$ :]
		$J_{7}K_{28}=K_{1}(J_{3}J_{20}+J_{7}J_{12})+J_{13}K_{12}-J_{20}K_{7}+J_{29}K_{3}$.

		\item[$S_{K_{29}}$ :]
		$2J_{7}K_{29}=2J_{30}K_{3}+J_{4}J_{20}K_{1}$.

		\item[$S_{K_{30}}$ :]
		$2J_{7}K_{30}=2J_{31}K_{3}+J_{20}(J_{4}K_{1}-2K_{8})+2J_{14}K_{12}$.

		\item[$S_{K_{31}}$ :]
		$J_{7}K_{31}=K_{1}^{2}(J_{24}-J_{22})-J_{29}K_{5}+2K_{3}K_{28}$.

		\item[$S_{K_{32}}$ :]
		
		$2J_{7}K_{32}=2J_{30}K_{5}+K_{1}(J_{4}(2K_{12}+J_{7}K_{1})-2J_{14}K_{3})$.
		
		\item[$S_{K_{33}}$ :]
		
		$2J_{7}K_{33}=K_{1}^{2}(2J_{26}+J_{4}J_{7})+2K_{1}(J_{7}K_{8}-2J_{14}K_{3}+J_{4}K_{12})+2(J_{31}K_{5}-J_{20}K_{11}+J_{14}K_{13})$.

		\item[$S_{K_{34}}$ :]
		
		$J_{7}K_{34}=(J_{3}K_{1}-K_{7})(J_{24}+J_{23})+J_{13}(K_{17}+K_{16})-J_{12}K_{17}+J_{24}K_{6}+J_{32}K_{3}$.

		\item[$S_{K_{35}}$ :]
		
		$2J_{7}K_{35}=J_{4}[2K_{48}-J_{23}K_{1}-J_{14}K_{2}-J_{12}K_{3}+J_{7}(K_{7}-2K_{6}-J_{3}K_{1}+J_{1}K_{3})+J_{1}(J_{14}K_{1}-K_{29})]+J_{3}(2K_{49}+J_{26}K_{1}-2J_{14}K_{3})+2J_{13}K_{19}$.

		\item[$S_{K_{36}}$ :]
		$6J_{7}K_{36}=K_{1}(12J_{43}+3J_{5}J_{24}-3J_{4}J_{23}+J_{10}(3J_{12}-4J_{13}))-6K_{8}(2J_{24}+J_{23})-6K_{7}(J_{26}+2J_{25})+6(J_{14}K_{16}-J_{12}K_{18}+3J_{25}K_{6}+2J_{34}K_{3}-J_{35}K_{2})$.

		\item[$S_{K_{37}}$ :]

		$J_{7}K_{37}=K_{1}^{2}(K_{17}-K_{15})+2K_{3}K_{31}-K_{5}K_{28}$

		\item[$S_{K_{38}}$ :]

		$4J_{7}K_{38}=K_{1}(J_{7}(2J_{19}-J_{4}J_{5}-J_{5}^{2})+2J_{4}J_{26})
		+4(K_{3}(J_{35}-J_{4}J_{14})+J_{4}J_{7}K_{8})$

		\item[$S_{K_{39}}$ :]
		
		$2J_{7}K_{39}=K_{3}(J_{4}K_{1}^{2}-2K_{1}K_{8}+4K_{32})+2K_{5}(J_{14}K_{1}-K_{29})$

		\item[$S_{K_{40}}$ :]
		
		$6J_{7}K_{40}=3K_{3}(2K_{33}+K_{1}^{2}(J_{4}-J_{5}))-K_{1}(6(K_{51}+K_{1}K_{19}+J_{7}K_{11})-3J_{4}K_{13}+2J_{10}K_{1}^{2})$.

		\item[$S_{K_{41}}$ :]
		$J_{7}K_{41}=J_{12}K_{22}+J_{22}K_{9}-J_{32}K_{5}+2K_{3}K_{34}-2K_{6}K_{15}$.

		\item[$S_{K_{42}}$ :]
		
		$2J_{7}K_{42}=J_{12}(K_{26}+4K_{25})+K_{3}(4K_{35}-2J_{18}K_{1})
		+K_{9}(J_{26}+2J_{25})-2K_{6}(K_{19}+3K_{18})+2(J_{24}K_{11}-J_{28}K_{1}^{2}-J_{33}K_{5}+K_{8}K_{17}-2J_{14}K_{24})$.

		\item[$S_{K_{43}}$ :]
		
		$J_{7}K_{43}=J_{12}(K_{26}+2K_{25})+2(K_{8}K_{17}-J_{14}K_{24}+K_{3}K_{36}
		-K_{6}(K_{19}+K_{18}))+J_{26}K_{9}-J_{34}K_{5}$.

		\item[$S_{K_{44}}$ :]
		$J_{7}K_{44}=K_{2}(K_{3}K_{5}-K_{39})-K_{1}(K_{1}K_{22}+K_{5}K_{6})+K_{3}(K_{37}-K_{3}K_{4})+K_{4}K_{32}$.
		\item[$S_{K_{45}}$ :]
		
		$J_{7}K_{45}=J_{14}K_{26}+J_{26}K_{11}-J_{35}K_{5}+2K_{3}K_{38}-2K_{8}K_{19}$.
		
		\item[$S_{K_{46}}$ :]

		$2J_{7}K_{46}=2K_{3}(K_{39}-K_{1}K_{11})+J_{4}K_{1}(K_{20}+K_{1}K_{5})$.

		\item[$S_{K_{47}}$ :]

		$2J_{7}K_{47}=K_{1}^{2}(J_{4}K_{5}-2(K_{26}+K_{25}))+K_{1}(J_{4}K_{20}-2(K_{56}+K_{3}K_{11}))+2K_{3}K_{40}$.
		
		\item[$S_{K_{48}}$ :]
		
		$J_{7}K_{48}=J_{12}((K_{29}-K_{30})-J_{14}K_{1})+J_{20}(K_{17}+K_{16})-J_{23}K_{12}+J_{31}K_{6}$.

		\item[$S_{K_{49}}$ :]
		$J_{7}K_{49}=K_{1}(J_{7}J_{25}-J_{14}^{2})+J_{14}K_{29}-J_{30}K_{8}+J_{41}K_{3}$. 
		
		\item[$S_{K_{50}}$ :]
		
		$J_{7}K_{50}=K_{1}(J_{24}K_{3}-J_{25}K_{2})+J_{12}K_{32}+J_{14}K_{31}-J_{41}K_{4}+K_{2}K_{49}+K_{3}K_{48}-K_{6}K_{29}-K_{8}K_{28}$.

		\item[$S_{K_{51}}$ :]

		$6J_{7}K_{51}=6(J_{41}K_{5}-K_{1}(J_{14}K_{8}+K_{12}(J_{10}+J_{11})))
		+J_{7}(3J_{14}K_{5}-6K_{1}K_{19}-K_{1}^{2}(5J_{10}+3J_{11})-3J_{7}K_{11})$.

		\item[$S_{K_{52}}$ :]
		
		$2J_{7}K_{52}=J_{5}(J_{7}K_{14}-J_{21}K_{3})+K_{1}(J_{8}J_{26}-J_{4}(J_{7}J_{8}+J_{32})+2J_{21}(J_{11}+J_{10}))+2[J_{14}K_{34}+J_{21}(K_{19}+K_{18})-K_{14}(J_{26}+J_{25})-J_{32}K_{8}+J_{42}K_{3}]$.

		\item[$S_{K_{53}}$ :]
		
		$12J_{7}K_{53}=K_{1}(3J_{4}(J_{34}-J_{5}J_{13})+3J_{13}(2J_{19}-J_{5}^{2})+J_{24}(4J_{10}+6J_{11})-12J_{7}J_{28})+6(J_{24}K_{19}-J_{26}K_{17}+2J_{43}K_{3})$.

		\item[$S_{K_{54}}$ :]

		$2J_{7}K_{54}=K_{1}^{2}(K_{3}(J_{1}(J_{5}-J_{4})-2J_{9})+2J_{4}K_{6})
		+2(J_{7}K_{1}K_{24}-J_{12}(J_{24}K_{20}K_{39}+K_{1}K_{11}))$.

		\item[$S_{K_{55}}$ :]
		
		$4J_{7}K_{55}=2J_{26}(K_{1}(J_{11}+J_{10})-K_{18})
		+2J_{25}(K_{1}(3J_{11}+2J_{10})+K_{19})-J_{4}J_{5}J_{14}K_{1}+4(J_{14}K_{38}-J_{35}K_{8}+J_{44}K_{3})$.

		\item[$S_{K_{56}}$ :] 
		
		$2J_{7}K_{56}=2K_{1}^{2}(K_{3}(J_{11}+J_{10})+J_{4}K_{8})+K_{1}(J_{7}(K_{26}-J_{5}K_{5}+6K_{25})-2J_{14}K_{11})+2(J_{25}K_{20}J_{14}K_{39}-J_{7}K_{3}K_{11})$

		\item[$S_{K_{57}}$ :]

		$2J_{7}K_{57}=J_{5}(J_{7}K_{21}-K_{3}K_{14})+J_{21}(K_{26}+2K_{25})+K_{34}(2K_{8}-J_{4}K_{1})-J_{26}K_{21}+2(K_{3}K_{52}-J_{32}K_{11}-K_{14}K_{18})$

		\item[$S_{K_{58}}$ :]

		$6J_{7}K_{58}=2J_{10}(K_{2}K_{8}-J_{14}K_{4})-K_{1}(6(K_{17}(J_{11}+2J_{10})+J_{18}K_{8}+J_{28}K_{3}) +K_{15}(6J_{11}+4J_{10}))+3[K_{26}(J_{24}+J_{23}+J_{22})+J_{26}(K_{23}-K_{22}+K_{24}(J_{26}+4J_{25})]
		+6[K_{18}(K_{16}-K_{15}-2K_{17}) -K_{19}(K_{17}+K_{16}) +K_{8}(2K_{36}+K_{35})+K_{38}(K_{6}-2K_{7}) +J_{35}(K_{10}-K_{9})-K_{11}(J_{34}+J_{33})+K_{25}(J_{22}-J_{23})+J_{13}K_{45}-J_{14}K_{43}
		+J_{43}K_{5}]
		$.

		\item[$S_{K_{59}}$ :]

		$J_{7}K_{59}=K_{1}(K_{5}K_{17}-K_{6}K_{11})+K_{6}K_{39}+K_{17}K_{20}$.

		\item[$S_{K_{60}}$ :]
		$6J_{7}K_{60}=3K_{18}(J_{5}K_{3}+K_{1}(3J_{11}+4J_{10})+3K_{19}+6K_{18})-2K_{8}(9K_{38}+J_{10}K_{3})+3J_{14}K_{45}+6(K_{3}K_{55}-J_{26}K_{25}-J_{25}(K_{26}+4K_{25})+2J_{35}K_{11}-J_{44}K_{5})$.

		\item[$S_{K_{61}}$ :]
		$J_{7}K_{61}=K_{11}(K_{3}^{2}-K_{32})+2K_{8}(K_{39}-K_{3}K_{5})+K_{13}K_{25}$.

		\item[$S_{K_{62}}$ :]
		
		$2J_{7}K_{62}=J_{5}(J_{7}K_{28}-J_{29}K_{3})-J_{12}(J_{26}K_{1}+2K_{49})+2(J_{22}(K_{29}-K_{30})+K_{15}(J_{31}-J_{30})+K_{1}(J_{14}J_{23}-J_{18}J_{20})+J_{20}K_{35}-J_{33}K_{12}+J_{41}K_{6}+J_{45}K_{3}).$
		
		\item[$S_{K_{63}}$ :]
		
		$J_{7}K_{63}=K_{30}(2J_{25}-J_{26})+K_{19}(J_{31}-2J_{30})-K_{1}(J_{7}^{2}J_{11}+J_{14}J_{26})+J_{46}K_{3}+2(J_{26}K_{29}-J_{31}K_{18})$.

		\item[$S_{K_{64}}$ :]
		
		$6J_{7}K_{64}=K_{1}^{2}(2J_{10}J_{12}+J_{7}(2J_{1}J_{10}-6J_{17}))
		+3K_{1}(K_{3}(J_{5}J_{12}-J_{4}J_{12})+J_{7}(K_{3}(2J_{9}+J_{1}(J_{4}-J_{5}))-2J_{4}K_{6})+2J_{12}K_{19})+6J_{7}^{2}(K_{22}-K_{24})+J_{12}(6(J_{7}K_{11}+K_{51})-3J_{4}K_{13})+3J_{4}J_{7}K_{31}+6J_{24}K_{33}$.

		\item[$S_{K_{65}}$ :]

		$2J_{7}K_{65}=K_{1}^{2}(J_{4}J_{26}-4J_{44})-2J_{46}K_{5}+4K_{3}K_{63}$

		\item[$S_{K_{66}}$ :]
		
		$2J_{7}K_{66}=2K_{1}[ J_{18}K_{13} +K_{9}(J_{26}+J_{25}) -K_{10}( J_{26}+2J_{25}) +K_{5}(J_{34} -J_{33}) -K_{8}(K_{16}+3K_{15})+K_{1}K_{53}-K_{3}K_{36}+ 2K_{6}K_{18}]+K_{28}(K_{26}+6K_{25}) +K_{20}(3J_{33}-J_{34})-K_{37}(3J_{26}+4J_{25})-K_{12}(K_{43}+2K_{42}) +2[K_{31}(K_{19}-K_{18})+K_{32}(3K_{15}-K_{16})+K_{39}(J_{23}-3J_{22})+K_{3}(2(J_{22}K_{5}-K_{3}K_{15})+K_{64})+2(K_{8}K_{50}-K_{11}K_{48})+K_{13}K_{36}]$.

		\item[$S_{K_{67}}$ :]

		$12J_{7}K_{67}=12K_{3}K_{65}+K_{1}(6J_{11}K_{33}+3J_{14}(J_{5}-2K_{26}-6K_{25})+2J_{10}(3K_{33}+K_{32}+K_{5}(2K_{1}K_{8}-J_{7})+J_{4}K_{1}^{2})+3J_{7}(K_{11}(2J_{4}-J_{5})-2K_{45})+6J_{11}K_{32}+3J_{4}(J_{5}K_{13}+2K_{1}K_{19}))$.

		\item[$S_{K_{68}}$ :]
		
		$4J_{7}K_{68}=4((2K_{17}-J_{9}K_{1})K_{40}-J_{24}K_{47}+J_{12}(K_{61}+K_{5}K_{11}))
		+2J_{7}(J_{3}(K_{46}-K_{5}^{2})-K_{5}K_{24}-K_{4}K_{25})-2J_{4}J_{12}K_{27}
		+J_{1}(J_{7}(J_{4}K_{27}-2K_{5}K_{11})+2(J_{5}-J_{4})K_{1}K_{40})$.
		
		\item[$S_{K_{69}}$ :]
		
		$J_{7}K_{69}=J_{26}K_{46}-J_{35}K_{27}+2(K_{3}K_{67}-K_{1}(K_{3}K_{45}-K_{5}K_{38})-K_{19}K_{39}+K_{20}K_{38})-K_{5}K_{65}-K_{13}K_{45}+K_{26}K_{32}$.

		\item[$S_{K_{70}}$ :]
		
		$2J_{7}K_{70}=J_{20}K_{1}(J_{5}J_{7}-2J_{26}-4J_{25})+2(K_{29}(J_{7}^{2}-2J_{31})
		+J_{30}(K_{30}-2J_{7}K_{3})+J_{47}K_{3})$.

		\item[$S_{K_{71}}$ :]

		$J_{7}K_{71}=K_{1}(J_{14}(2K_{29}-K_{30})
		-(J_{20}K_{18}+J_{41}K_{3}))+K_{3}K_{70}
		+K_{29}(J_{7}K_{3}+K_{30}-2K_{29})+J_{30}(2K_{32}-K_{1}K_{8}-K_{3}^{2}-K_{33})$.

		\item[$S_{K_{72}}$ :] 
		
		$2J_{7}K_{72}=2K_{1}^{2}(2J_{14}K_{8}+K_{12}(J_{11}+2J_{10})+J_{7}(K_{19}
		-K_{3}(J_{5}+J_{4}))+2J_{4}K_{30})+2J_{30}K_{40}-2(K_{56}+K_{1}K_{26})
		+J_{7}(K_{1}(2J_{4}K_{13}-6K_{51}-5J_{14}K_{5}+3J_{7}K_{11})+K_{1}^{3}(3J_{11}+J_{10})-2J_{14}K_{20})+J_{4}(J_{20}(K_{20}-K_{1}K_{5})+J_{14}K_{1}^{3})$.

		\item[$S_{K_{73}}$ :]

		$2J_{7}K_{73}=K_{1}(K_{8}(K_{1}K_{8}+(3K_{33}-2K_{32})+2K_{3}^{2})+K_{3}K_{51}+K_{5}K_{49}-3K_{11}K_{30}+K_{12}K_{26}-K_{13}(K_{19}+2K_{18}))+K_{3}^{4}-K_{3}^{2}(K_{33}+K_{32})
		+3K_{3}(K_{72}-K_{5}K_{29})+K_{5}(2J_{7}K_{32}+K_{71})+2(K_{12}K_{56}+K_{30}K_{39})
		+K_{29}(2K_{39}-K_{40})-3K_{32}K_{33}$.

		\item[$S_{K_{74}}$ :]
		$6J_{7}K_{74}=K_{1}^{3}(6J_{5}K_{11}+12K_{67}+K_{5}(6J_{11}-4J_{10})-15K_{45})
		+K_{1}^{2}(K_{20}(3J_{11}+5J_{10})+K_{39}(6J_{4}-9J_{5})-3J_{4}K_{3}K_{5})
		+3K_{1}(K_{27}(J_{4}J_{7}-J_{26})+J_{14}(3K_{46}-4K_{5}^{2}-K_{47})-3J_{7}K_{61})
		+3(K_{3}(2K_{73}-J_{14}K_{27})+J_{7}K_{8}K_{27})$.

		\item[$S_{K_{75}}$ :]
		
		$J_{7}K_{75}=K_{1}(3K_{11}(K_{39}-K_{40})
		-2K_{27}(K_{19}+K_{18})-K_{20}K_{26})+K_{3}(K_{11}K_{20}-2K_{8}K_{27})
		+K_{5}(J_{7}K_{46}-K_{5}K_{32}+2K_{8}K_{20}-K_{11}K_{13}+K_{73})+2(K_{32}K_{47}-K_{33}K_{46})$.

	\end{itemize}
	
\end{theorem}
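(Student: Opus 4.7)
The plan is to verify each of the 109 syzygies by the same constructive/computational method used throughout Section 2 and applied in Section 3: reduce every syzygy to a linear identity in the monomial basis $\mathcal{M}_{(d_{0},d_{1},d_{3},\delta )}$ associated with its type. Concretely, I first determine for each syzygy $S$ the multi-type $(d_{0},d_{1},d_{3},\delta )$ to which both sides belong. This is forced by the homogeneity law: every term appearing in $S$ must be a product of generators whose partial degrees in $a^{j}$, $a_{\alpha }^{j}$, $a_{\alpha \beta \gamma }^{j}$, $x$ sum to the same $(d_{0},d_{1},d_{3},\delta )$. For example, $S_{J_{2}}$ is of type $(0,2,0,0)$, $S_{J_{19}}$ of type $(0,0,4,0)$, $S_{K_{75}}$ of type $(1,0,3,6)$, and so on; each can be read off from the lists of generators given in Theorem 3.

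Next, for each such type I would build the ordered set of monomials $\mathcal{M}_{(d_{0},d_{1},d_{3},\delta )}=\{m_{1}\prec \cdots \prec m_{l}\}$ using the total lexicographic order (\ref{cubicorder}). Then, invoking Algorithm 2 of \cite{DahiraDaliandSuiSunCheng}, I expand each $J_{i}$ and $K_{j}$ that appears in $S$ as a linear combination of the $m_{k}$'s, together with every product $J_{i_{1}}^{\alpha _{1}}\cdots K_{j_{t}}^{\beta _{t}}$ that occurs, distributing the tensor products, executing the sums over repeated indices, and applying the explicit $\varepsilon $-tensor values. Each covariant $f$ becomes a coordinate vector $v(f)\in \Bbbk ^{l}$. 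The syzygy $S\equiv 0$ is then equivalent to the vector identity $v(\text{LHS of }S)=v(\text{RHS of }S)$, which is a routine linear check. This is exactly the vector comparison illustrated in Section 2 for $\mathcal{F}_{(0,0,2,0)}$ and $\mathcal{F}_{(0,0,4,0)}$, extended systematically to every type needed.

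The completeness and minimality of the list of $109$ syzygies would then be argued as follows. For each fixed type, let $\mathcal{B}_{(d_{0},d_{1},d_{3},\delta )}$ be the basis produced by the algorithm; the monomials $J_{1}^{\alpha _{1}^{i}}\cdots K_{75}^{\gamma _{i}}$ of the correct type that lie in the ambient generating family $\mathcal{F}_{(d_{0},d_{1},d_{3},\delta )}$ but not in $\mathcal{B}_{(d_{0},d_{1},d_{3},\delta )}$ must each be expressible as a linear combination of basis elements; each such expression is a syzygy. One enumerates types by total degree and, at every step, discards syzygies that follow from previously found ones (i.e.\ lie in the ideal they generate, tested again by Algorithm 2) until a stationary list is reached. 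By the Hilbert basis theorem combined with the degree bound $D$ from Theorem 2, this process terminates, and minimality is obtained by checking that no syzygy in the list can be written as a polynomial combination of the others using the test of membership in the ideal generated by the remaining ones.

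The main obstacle is therefore not conceptual but computational: the types run over multi-indices with $d_{0}+d_{1}+d_{3}+\delta $ as large as $9$, the cardinality $l$ of $\mathcal{M}_{(d_{0},d_{1},d_{3},\delta )}$ grows quickly, and a single high-type syzygy such as $S_{K_{74}}$ or $S_{J_{47}}$ requires expanding products of up to seven tensors each with alternations over four $\varepsilon $-symbols. The verification must therefore be carried out with a computer algebra implementation of the contraction/alternation rules and of the monomial ordering (\ref{cubicorder}); the heart of the proof is this bookkeeping, and its correctness rests on the tensor-level identities encoded in Gurevich's theorem, which guarantee that every centro-affine covariant admits such a monomial decomposition unique up to the syzygies being listed.
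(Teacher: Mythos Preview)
Your approach---expanding every term to the monomial basis $\mathcal{M}_{(d_{0},d_{1},d_{3},\delta)}$ and comparing coordinate vectors---would verify the identities, but it is \emph{not} the route the paper takes, and it misses the structural reason why every syzygy has the shape $\lambda\,J_{7}^{m}\,I_{j}=p$. The paper instead exploits Theorem~4: when $J_{7}\neq 0$ the change of variables $q$ of~(\ref{varch}) sends the system to one with $b^{1}=0$, $b^{2}=1$ and all remaining coefficients $b^{j}_{\alpha}$, $b^{j}_{\alpha\beta\gamma}$ written explicitly as rational functions of the $J_{i}$ with denominators powers of $J_{7}$. Because each $J_{i}$, $K_{j}$ is centro-affine invariant, one simply \emph{recomputes} it in the new coordinates, substitutes the expressions of the $b$'s, and reads off an identity of the form $J_{7}^{m}I_{j}=p(J_{\bullet},K_{\bullet})$; the paper illustrates this for $J_{41}$ and $K_{8}$. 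This explains at once why $J_{7}$ appears on every left-hand side, why the list is independent (each relation isolates a single generator), and why validity at $J_{7}=0$ follows by continuity. Your direct-expansion plan gives no such explanation and forces you to guess the type of each full relation; note incidentally that $S_{J_{2}}$ lives in type $(4,2,2,0)$ (from $J_{7}^{2}J_{2}$), not $(0,2,0,0)$, and $S_{J_{19}}$ in $(4,0,6,0)$, so the degrees you must handle are well above $9$.

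Your completeness/minimality discussion is also weaker than the paper's. The paper argues generation directly from the shape $\lambda_{i}J_{7}^{m_{i}}I_{j}=p_{i}$: since each $I_{j}$ can be eliminated modulo powers of $J_{7}$, any syzygy among the generators can be rewritten through the $p_{i}$. Your proposal to ``discard syzygies that follow from previously found ones until a stationary list is reached'' is the right intuition but, without the $J_{7}^{m}I_{j}$ normal form, you have no a~priori handle on when the process stops or why exactly $109$ relations suffice.
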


\begin{proof}
	
	Since $J_{1},...,J_{47},K_{1},...,K_{75}$ still invariant under any centro-affine transformation, they still particularly invariant after the transformation $q$\ref{varch} . In each element of $\mathcal{S}(2,\mathbb{R},\left \{ 0,1,3\right \} )$ we substitute each tonsorial coefficient by its expression in $J_{1},...,J_{47},K_{1},...,K_{75}$ then lead on syzygies. For example,

	\begin{equation*}
	J_{41}=b^{ \alpha }b^{\beta }b^{ \gamma }b^{ \delta }b_{\delta \nu \tau }^{
		\mu }b_{\gamma \mu p}^{\nu }b_{\alpha \beta q}^{\tau }\varepsilon ^{pq}=\frac{1}{J_{7}}a^{\alpha }a^{\beta }a^{\gamma }a^{\delta }a_{\delta \nu \tau}^{\mu }a_{\gamma \mu p}^{\nu }a_{\alpha \beta q}^{\tau }\varepsilon ^{pq}
	\end{equation*}
	where $\alpha ,\beta ,\gamma,\delta ,\mu ,\nu ,\tau ,p,q=1,2$ i.e.
	
	\begin{itemize}
		\item[$J_{41}=$] $\displaystyle \sum_{\alpha =1}^{2}\sum_{\beta
			=1}^{2}\sum_{\gamma =1}^{2}\sum_{\delta =1}^{2}\sum_{\mu =1}^{2}\sum_{\eta
			=1}^{2}\sum_{\tau =1}^{2}\sum_{p=1}^{2}\sum_{q=1}^{2}b^{ \alpha }b^{\beta
		}b^{ \gamma }b^{ \delta }b_{\delta \nu \tau }^{ \mu }b_{\gamma \mu p}^{\nu
		}b_{\alpha \beta q}^{\tau }\varepsilon ^{pq}$
		
		\item[$=$] $-{(b^{{1}})}^{2}{(b^{{2}})}^{2}b^{2}_{111}{b^{2}_{222}}^{2} + {(b^{
				{1}})}^{3}b^{{2}}b^{1}_{122}{b^{1}_{111}}^{2}-{(b^{{1}})}^{4}b^{2}_{111}b^{2}_{112}b^{1}_{122}-{(b^{{1}})}^{4}b^{1}_{111}b^{1}_{122}b^{2}_{111}+\\
		+{(b^{{2}})}^{2}{(b^{{1}})}^{2}b^{1}_{222}{b^{1}_{111}}^{2}+{(b^{{1}})}^{4}{b^{1}_{112}}^{2}b^{2}_{111}-%
		{(b^{{1}})}^{4}b^{2}_{111}{b^{2}_{122}}^{2}-{(b^{{1}})}^{3}b^{{2}}{
			b^{1}_{112}}^{2}b^{2}_{112}-b^{{1}}{(b^{{2}})}^{3}b^{1}_{122}{b^{1}_{112}}
		^{2}\\-{(b^{{1}})}^{2}{(b^{{2}})}^{2}{b^{1}_{112}}^{3}+2\,{(b^{{1}})}^{2}{(b^{{
					2}})}^{2}{b^{1}_{122}}^{2}b^{2}_{111}+{(b^{{2}})}
		^{4}b^{1}_{112}b^{2}_{222}b^{1}_{122}-{(b^{{2}})}^{4}{b^{1}_{122}}
		^{2}b^{1}_{112}-{(b^{{1}})}^{4}b^{1}_{111}b^{2}_{122}b^{2}_{112}\\
		+{(b^{{1}})}
		^{4}{b^{2}_{112}}^{2}b^{2}_{122}-2\,{(b^{{2}})}^{2}{(b^{{1}})}^{2}b^{1}_{222}
		{b^{2}_{112}}^{2}-{(b^{{2}})}^{3}b^{{1}}b^{2}_{112}{b^{2}_{222}}^{2}+{(b^{{2}
			})}^{4} b^{2}_{222}b^{2}_{112}b^{1}_{222}+b^{{2}}{(b^{{1}})}^{3}b^{2}_{112}{
			b^{2}_{122}}^{2}\\+{(b^{{2}})}^{4}b^{1}_{222}b^{2}_{112}b^{1}_{122}+{(b^{{2}})}
		^{2}{(b^{{1}})}^{2}b^{1}_{112}{b^{2}_{122}}^{2}+{(b^{{2}})}^{3}b^{{1}
		}b^{1}_{122}{b^{2}_{122}}^{2}+{(b^{{2}})}^{2}{(b^{{1}})}^{2}{b^{2}_{122}}
		^{3}\\+{(b^{{2}})}^{2}{(b^{{1}})}^{2}b^{1}_{111}b^{2}_{222}b^{1}_{112}+{(b^{{2}
			})}^{2}{(b^{{1}})}^{2}b^{2}_{222}b^{2}_{111}b^{1}_{122}-2\,{(b^{{1}})}^{3}b^{
			{2}}b^{2}_{111}b^{2}_{122}b^{2}_{222}+{(b^{{1}})}^{3}b^{{2}
		}b^{1}_{111}b^{2}_{122}b^{1}_{112}\\
		+{(b^{{2}})}^{3}b^{{1}
		}b^{2}_{222}b^{2}_{111}b^{1}_{222}-{(b^{{1}})}^{3}b^{{2}
		}b^{1}_{111}b^{1}_{222}b^{2}_{111}-{(b^{{1}})}^{3}b^{{2}
		}b^{2}_{111}b^{2}_{112}b^{1}_{222}-2\,{(b^{{2}})}^{4}{b^{1}_{122}}
		^{2}b^{2}_{122}\\
		+{(b^{{2}})}^{4}b^{1}_{222}{b^{1}_{112}}^{2}+2\,{(b^{{1}})}
		^{4}b^{1}_{112}{b^{2}_{112}}^{2}-{(b^{{2}})}^{4} b^{1}_{222}{b^{2}_{122}}
		^{2}-3\,b^{{1}}{(b^{{2}})}^{3}b^{1}_{112}b^{2}_{122}b^{1}_{122}\\
		-2\,{(b^{{1}})
		}^{2}{(b^{{2}})}^{2}b^{1} _{112}b^{2}_{112}b^{1}_{122}+{(b^{{2}})}^{3}b^{{1}
		}b^{1}_{222}b^{2}_{111}b^{1}_{122}+{(b^{{2}})}^{3}b^{{1}}b^{1}_{122}
		b^{2}_{222}b^{2}_{112}+2\,{(b^{{1}})}^{2}{(b^{{2}})}
		^{2}b^{1}_{122}b^{2}_{122}b^{2}_{112}\\
		-{(b^{{2}})}^{3}b^{{1}}{b^{1}_{122}}
		^{2}b^{1}_{111}+{(b^{{1}})}^{3}b^{{2}}{b^{2}_{112}}^{2}b^{2}_{222}-{(b^{{1}})
		}^{3}b^{{2}}b^{1}_{111}b^{2}_{222}b^{2} _{112}-2\,b^{{1}}{(b^{{2}})}
		^{3}b^{2}_{122}b^{2}_{112}b^{1}_{222}\\
		-{(b^{{1}})}^{2}{(b^{{2}})}
		^{2}b^{1}_{111}b^{1}_{222}b^{2} _{112}-{(b^{{2}})}^{3}b^{{1}
		}b^{1}_{112}b^{2}_{222}b^{2}_{122}-{(b^{{1}})}^{3}b^{{2}}b^{1}_{111}{
			b^{2}_{122}}^{2}-{(b^{{1}})} ^{2}{(b^{{2}})}
		^{2}b^{1}_{111}b^{2}_{222}b^{2}_{122}\\
		+{(b^{{2}})}^{3}b^{{1}}{b^{2}_{122}}
		^{2}b^{2}_{222}-{(b^{{2}})}^{2} {(b^{{1}})}
		^{2}b^{1}_{112}b^{2}_{222}b^{2}_{112}-{(b^{{1}})}^{3}b^{{2}}{b^{1}_{112}}
		^{2}b^{1}_{111}+2 {(b^{{1}})}^{3}b^{{2}}b^{1}_{122}b^{2}_{111}b^{1}_{112}\\
		+3\,{(b^{{1}})}^{3}b^{{2}}b^{1}_{112}b^{2}_{122}b^{2}_{112} +{(b^{{2}})}^{3}b^{{1}}{
			b^{1}_{112}}^{2}b^{2}_{222} +2\,{(b^{{2}})}^{3}b^{{1}
		}b^{1}_{222}b^{1}_{111}b^{1}_{112}-{(b^{{1}})} ^{2}{(b^{{2}})}^{2}{
			b^{1}_{112}}^{2}b^{2}_{122}\\
		+{(b^{{2}})}^{3}b^{{1}}b^{1}_{111}b^{2}_{222}b^{1}_{122}+{(b^{{1}})}^{2} {(b^{{2}})}
		^{2}b^{1}_{111}b^{2}_{122}b^{1}_{122}-{(b^{{1}})}^{3}b^{{2}
		}b^{1}_{111}b^{2}_{112}b^{1}_{122}$
	\end{itemize}
	
	Since $b^{ 1}=0$ and $b^{2}=1$ then
	
	\begin{itemize}
		\item[$J_{41}=$] ${b^{1}_{112}}^{2}b^{1}_{222}-b^{1}_{112}{b^{1}_{122}}^{2}+b^{1}_{112}b^{1}_{122}b^{2}_{222}-2\,{b^{1}_{122}}^{2}b^{2}_{122}+b^{1}_{122}b^{1}_{222}b^{2}_{112}+b^{1}_{222}b^{2}_{112}b^{2}_{222}-b^{1}_{222}{b^{2}_{122}}^{2}$

	\item[$=$] $-{\frac {( 1/2\,J_{{4}}J_{{20}}-J_{{7}}J_{{14}}) ^{2}J_{{20}}}{{J_{{7}}}^{4}}}-{\frac {( 1/2\,J_{{4}}J_{{20}}-J_{{7}}J_{{14}} ) ({J_{{7}}}^{2}-J_{{30}} ) ^{2}}{{J_{{7}}}^{4}}}+{\frac { ( 1/2\,J_{{4}}J_{{20}}-J_{{7}}J_{{14}} ) ( {J_{{7}}}^{2}-J_{{30}} ) J_{{30}}}{{J_{{7}}}^{4}}}+{\frac {J_{{20}}( J_{{7}}J_{{14}}-1/2\,J_{{4}}J_{{20}} ) ^{2}}{{J_{{7}}}^{4}}}\newline-2{\frac { ({J_{{7}}}^{2}-J_{{30}}) ^{2}( J_{{7}}J_{{14}}-1/2\,J_{{4}}J_{{20}} ) }{{J_{{7}}}^{4}}}-{\frac {( {J_{{7}}}^{2}-J_{{30}}) J_{{20}}( 1/2\,J_{{4}} ( {J_{{7}}}^{2}-J_{{30}} ) -J_{{7}}J_{{25}})}{{J_{{7}}}^{4}}}-{\frac {J_{{20}}( 1/2\,J_{{4}} ( {J_{{7}}}^{2}-J_{{30}}) -J_{{7}}J_{{25}} ) J_{{30}}}{{J_{{7}}}^{4}}}$

		\item[$=$] ${\frac { \left( -{J_{{7}}}^{2}+J_{{30}} \right) J_{{14}}+J_{{20}}J_{{
						25}}}{J_{{7}}}}
		$
	\end{itemize}
or $J_{7}J_{41}= \left( -{J_{{7}}}^{2}+J_{{30}} \right) J_{{14}}+J_{{20}}J_{{25}}$ then lead to $
	S_{J_{41}}.$\newline
	Another example,
	
	\begin{equation*}
	K_{8}=a^{\alpha }a_{\delta \alpha p}^{\beta }a_{\gamma \beta q}^{\gamma
	}x^{\delta }\varepsilon ^{pq}
	\end{equation*}
	where $\alpha ,\beta ,\gamma,\delta ,p,q=1,2$ i.e.
	
	\begin{itemize}
		\item[$K_{8}=$] $
		-b^{1}b^{1}_{111}b^{1}_{122}x^{2}+b^{1}b^{1}_{111}b^{2}_{122}x^{1}+b^{1}{
			b^{1}_{112}}^{2}x^{2}-2
		\,b^{1}b^{1}_{112}b^{2}_{112}x^{1}+b^{1}b^{1}_{122}b^{2}_{111}x^{1}+b^{1}b^{2}_{222}b^{2}_{111}x^{1}\\
		-b^{1}b^{2}_{112}b^{2}_{122}x^{1}+b^{1}b^{2}_{222}b^{2}_{112}x^{2}-b^{1}{b^{2}_{122}}
		^{2}x^{2}-b^{2}b^{1}_{111}b^{1}_{122}x^{1}-b^{2}b^{1}_{111}b^{1}_{222}x^{2}+b^{2}
		{b^{1}_{112}}^{2}x^{1}\\
		+b^{2}b^{1}_{112}b^{1}_{122}x^{2}-b^{2}b^{1}_{112}b^{2}_{222}x^{2}+2\,b^{2}b^{1}_{122} b^{2}_{122}x^{2}-b^{2}b^{1}_{222}b^{2}_{112}x_{2}+b^{2}b^{2}_{222}b^{2}_{112}x^{1}-b^{2}
		{b^{2}_{122}}^{2}x^{1}$
		
		\item[$=$] $-b^{1}_{111}b^{1}_{122}x^{1}-b^{1}_{111}b^{1}_{222}x^{2}+{
			b^{1}_{112}}%
		^{2}x^{1}+b^{1}_{112}b^{1}_{122}x^{2}-b^{1}_{112}b^{2}_{222}x^{2}+2
		\,b^{1}_{122}b^{2}_{122}x^{2}-b^{1}_{222}b^{2}_{112}x_{2}\\
		+b^{2}_{222}b^{2}_{112}x^{1}-{b^{2}_{122}}^{2}x^{1}$

		\item[$=$] ${\frac {J_{{14}}K_{{3}}+J_{{25}}K_{{1}}}{J_{{7}}}}$\\

	\end{itemize}
or $J_{7}K_{8}=J_{{14}}K_{{3}}+J_{{25}}K_{{1}}$ then lead to $S_{K_{8}}.$\newline
	
	Similarly, we obtain $109$ syzygies between $J_{1},\ldots
	,J_{47},K_{1},\ldots ,K_{75}$.

	These syzygies are independent since each syzygy contain only one covariant of $\mathcal{S}(2,\mathbb{R},\left \{ 0,1,3\right \} )$ and generating. Indeed, a syzygy $S_{i}$ is of the form $\lambda _{i}J_{7}^{m_{i}}I_{j}+p_{i}$ where for $i=1,109$ $\lambda_{i}\in\mathbb{R}^{\ast },m_{i}\in\mathbb{N}^{\ast },I_{j}\in \mathcal{S}(2,\mathbb{R},\left \{ 0,1,3\right \} )$ and $p_{i}$ is the right hand side
	of $S_{i}.$ Then any syzygy $S$ between $J_{1},\ldots ,J_{47},K_{1},\ldots,K_{75}$ can be expressed in $p_{i}/a_{i}J_{7}^{m_{i}}$ then $S$ can be expressed in $p_{i},,i=1,109.$ Furthermore, these syzygies still hold at $%
	J_{7}=0$ by passing to the limit since $\mathcal{S}(2,\mathbb{R},\left \{ 0,1,3\right \} )$ is isomorphic
	to $\mathbb{R}^{6}$. The proof is completed.
\end{proof}

\end{document}